\def\@seccntformat#1{\@ifundefined{#1@cntformat}%
   {\csname the#1\endcsname\quad}  
   {\csname #1@cntformat\endcsname}
}
\let\oldappendix\appendix 
\renewcommand\appendix{%
    \oldappendix
    \newcommand{\section@cntformat}{\appendixname~\thesection:\quad}
}
\newtheorem{lemma}{Lemma}[section]
\newtheorem{theorem}{Theorem}[section]
\newcommand{\copyrightstatement}{
\begin{textblock}{0.4}(0.08,0.93)    
\noindent
\footnotesize
\copyright Hisashi Kobayashi, 2016.
\end{textblock}
}
\begin{document}
\copyrightstatement
\title{Local Extrema of the $\Xi(t)$ Function and\\
The Riemann Hypothesis}
\date{2016/03/04}
\author{Hisashi Kobayashi\footnote{Professor Emeritus, Department of Electrical Engineering, School of Engineering and Applied Science, Princeton University, Princeton, NJ 08544.}}

\maketitle

\begin{abstract}
In the present paper we obtain a necessary and sufficient condition to prove the Riemann hypothesis in terms of certain properties of local extrema of the function $\Xi(t)=\xi(\tfrac{1}{2}+it)$.  First, we prove that positivity of all local maxima and negativity of all local minima of $\Xi(t)$ form a necessary condition for the Riemann hypothesis to be true.  After showing that any extremum point of $\Xi(t)$ is a saddle point of the function $\Re\{\xi(s)\}$, we prove that the above properties of local extrema of $\Xi(t)$ are also a sufficient condition for the Riemann hypothesis to hold at $t\gg 1$.

We present a numerical example to illustrate our approach towards a possible proof of the Riemann hypothesis. Thus, the task of proving the Riemann hypothesis is reduced to the one of showing the above properties of local extrema of $\Xi(t)$.\\

\noindent\emph{Key words}:  Riemann hypothesis, Riemann's $\xi(s)$ function, product-form representation of $\xi(s)$, $\Xi(t)$ function, local extema of $\Xi(t)$, saddle points of $\Re\{\xi(s)\}$.
\end{abstract}

\section{Properties of local extrema of $\mathbf{\Xi(t)}$ as a necessary condition for the Riemann hypothesis}

We begin with recapitulating some definitions and relations presented in our earlier paper \cite{report-no-5}.  Riemann's $\xi(s)$ function\footnote{The $\xi$ function should be attributed to Riemann's original paper \cite{riemann}, but his function $\xi(t)$ is what Titchmarsh \cite{titchmarsh} and others denote as $\Xi(t)$, the notation we adopt here.} is defined (see e.g., Edwards \cite{edwards} p. 16) by
\begin{align}\label{xi-def}
\xi(s)=\frac{s(s-1)}{2}\pi^{-s/2}\Gamma(s/2)\zeta(s),
\end{align}
where $\zeta(s)$ is Riemann's zeta function defined by
\begin{align}
\zeta(s)=\sum_{n=1}^\infty n^{-s}, ~~\mbox{for}~~\Re(s)>1,
\end{align}
which is then extended to the entire domain of the complex variable $s=\sigma+it$ by analytic continuation (See Riemann \cite{riemann} and Edwards \cite{edwards}).

The function $\xi(s)$ has the product-form representation (see \cite{edwards}, p. 20. also Eqn. (24) of \cite{report-no-5}):
\begin{align}\label{product-form}
\xi(s)=\tfrac{1}{2}\prod_n\left(1-\frac{s}{\rho_n}\right),
\end{align}
The value of $\xi(s)$ on the critical line $s=\tfrac{1}{2}+it$ is a real function of $t$, denoted here as $\Xi(t)$ (see e.g., Titchmarsh\cite{titchmarsh}).
\begin{align}\label{def-Xi}
\Xi(t)=\xi\left(\tfrac{1}{2}+it\right)=\Re\left\{\xi\left(\tfrac{1}{2}+it\right)\right\}.
\end{align}

In the present paper we make use of the real functions $a(t)$ and $b(t)$ that we introduced in (32) and (33) of \cite{report-no-5}.
\begin{align}\label{def-a-t}
a(t)=\tfrac{1}{2}\left.\frac{\partial^2\Re\{\xi(\sigma+it)\}}{\partial\sigma^2}\right|_{\sigma=\tfrac{1}{2}}
=-\tfrac{1}{2}\Xi''(t),
\end{align}
and
\begin{align}\label{def-b-t}
b(t)&=\left.\frac{\partial\Im\{\xi(s)\}}{\partial\sigma}\right|_{\sigma=\tfrac{1}{2}}= -\Xi'(t).
\end{align}
In the subsection below, we will derive essential properties of local extrema of $\Xi(t)$ under the assumption that the Riemann hypothesis is true.

\subsection{A necessary condition for the Riemann hypothesis to be true}\label{subsec-RH-true}

Let us suppose that the Riemann hypothesis is true, i.e., all zeros of $\xi(s)$ take the form $\rho_n=\tfrac{1}{2}+it_n$, with $t_n$ being real.  Then, their complex conjugates $\rho_n^*=\tfrac{1}{2}-it_n$ must be also zeros.  We label these countably infinite zeros so that $\rho_{-n}=\rho_n^*$, with $n>0$. Then, the product form (\ref{product-form}) can be rewritten as
\begin{align}\label{product-RH-true}
\xi(s)&=\tfrac{1}{2}\prod_{n>0}\left(1-\frac{s}{\rho_n}\right)\left(1-\frac{s}{\rho_n^*}\right)=\tfrac{1}{2}\prod_{n>0}g_n(s),
\end{align}
where we define $g_n(s)$ by
\begin{align}
g_n(s)=\left(1-\frac{s}{\rho_n}\right)\left(1-\frac{s}{\rho_n^*}\right)=\frac{(s-\rho_n)(s-\rho_n^*)}{|\rho_n|^2}=\frac{\lambda^2+t_n^2-t^2+2it\lambda}{t_n^2+\tfrac{1}{4}},
\end{align}
where $\lambda=\sigma-\tfrac{1}{2}$, as defined in \cite{report-no-5}, i.e., $s=\tfrac{1}{2}+\lambda+it$. We term $g_n(s)$ the ``$n$th factor'' of the product-form expression for $\xi(s)$.
Let us bring the $j$-th factor out of the product-form:
\begin{align}\label{jth-term}
\xi(s)=g_j(s) \xi_{(j)}(s),
\end{align}
where
\begin{align}\label{xi-minus-j}
\xi_{(j)}(s)=\tfrac{1}{2}\prod_{n\neq j, n>0}g_n(s).
\end{align}
It is shown in Appendix A that $\xi_{(j)}(s)$ can be approximated by the following expression for $t\approx t_j$:
\begin{align}\label{xi_j^*-formula}
\xi_{(j)}(s)=\alpha_j(\lambda^2-t^2+\gamma_j +2it\lambda),
\end{align}
where $\alpha_j$ and $\gamma_j$ are constants.
Then, the real part of $\xi(s)$ is found to be
\begin{align}\label{Re-xi}
\Re\{\xi(s)\}&=\frac{\lambda^2+t_j^2-t^2}{t_j^2+\tfrac{1}{4}}\cdot\Re\{\xi_{(j)}(s)\}
-\frac{2\lambda t}{t_j^2+\tfrac{1}{4}}\cdot \Im\{\xi_{(j)}(s)\}
\approx -\frac{4\alpha_jt^2\lambda^2}{t_j^2},~~\mbox{for}~~ t\approx t_j\gg 1,
\end{align}
where the approximation was obtained by noting that $\lambda^2+t_j^2-t^2 \ll 2\lambda t$ for $t\approx t_j\gg 1$.  Thus, it is a parabola of $\lambda$ and touches the horizontal axis at its extremum point $\lambda=0$ (see e.g., Figure \ref{fig-cross}(a)).
Similarly, the imaginary part of $\xi(s)$ is found to be
\begin{align}\label{Im-xi}
\Im\{\xi(s)\}&=\frac{2\lambda t}{t_j^2+\tfrac{1}{4}}\cdot\Re\{\xi_{(j)}(s)\} +\frac{\lambda^2+t_j^2-t^2}{t_j^2+\tfrac{1}{4}}\cdot\Im\{\xi_{(j)}(s)\}
\approx \frac{2\alpha_jt(\gamma_j-t^2+\lambda^2)\lambda}{t_j^2},~~\mbox{for}~~ t\approx t_j\gg 1.
\end{align}
Because both real and imaginary parts of the cross section $\xi(\tfrac{1}{2}+\lambda+it_j)$ are zero at $\lambda=0$, the point $s=\tfrac{1}{2}+it_j$ is confirmed to be a zero of $\xi(s)$, as it should be.  For a further discussion on the cross-sections, see Appendix A and the numerical example in the next subsection, .

By setting $\lambda=0$ in (\ref{product-RH-true}), we find
\begin{align}
\Xi(t)=\tfrac{1}{2}\prod_{n>0}\frac{t_n^2-t^2}{t_n^2+\tfrac{1}{4}}.
\end{align}
Taking the logarithm and differentiating both sides, we have
\begin{align}
\frac{\Xi'(t)}{\Xi(t)}&=2\sum_{n>0}\frac{t}{t^2-t_n^2}.
\end{align}
Differentiating the above once more, we obtain
\begin{align}\label{diffs-of-txi}
\frac{\Xi''(t)}{\Xi(t)}- \left(\frac{\Xi'(t)}{\Xi(t)}\right)^2
&= -2\sum_{n>0}\frac{t^2+t_n^2}{(t^2-t_n^2)^2}<0 ,~~\mbox{for all}~~t,
\end{align}
which can be rearranged, using $a(t)$ and $b(t)$ defined in (\ref{def-a-t}) and (\ref{def-b-t}), as
\begin{align}
\frac{a(t)}{\Xi(t)}=-\frac{1}{2}\left(\frac{b(t)}{\Xi(t)}\right)^2+\sum_{n>0}\frac{t^2+t_n^2}{(t^2-t_n^2)^2},
\end{align}
which leads us to the following theorem:
\begin{theorem}(Local extrema of the function $\Xi(t)$ and the Riemann hypothesis)\label{theorem-necessity-RH}

If the Riemann hypothesis is true, local maxima of the function $\Xi(t)$ are all positive, and local minima are all negative.
\end{theorem}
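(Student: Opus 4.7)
The plan is to evaluate the final identity displayed just above the theorem at a critical point of $\Xi$ and read off the sign of $\Xi(t^*)$ via the standard second derivative test. At any local extremum $t^*$ of $\Xi$ one has $\Xi'(t^*)=0$, equivalently $b(t^*)=0$. Setting $t=t^*$ in
\begin{align*}
\frac{a(t)}{\Xi(t)} = -\frac{1}{2}\left(\frac{b(t)}{\Xi(t)}\right)^2 + \sum_{n>0}\frac{t^2+t_n^2}{(t^2-t_n^2)^2}
\end{align*}
kills the first term on the right and leaves
\begin{align*}
\frac{a(t^*)}{\Xi(t^*)} = \sum_{n>0}\frac{(t^*)^2+t_n^2}{((t^*)^2-t_n^2)^2},
\end{align*}
whose right-hand side is strictly positive whenever $t^*$ differs from every $\pm t_n$. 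This forces $a(t^*)$ and $\Xi(t^*)$ to share the same sign, and since $a(t^*)=-\tfrac{1}{2}\Xi''(t^*)$, equivalently forces $\Xi(t^*)$ and $\Xi''(t^*)$ to have opposite signs. Because the right-hand side is strictly nonzero, $\Xi''(t^*)\neq 0$, so the critical point is automatically a strict extremum.

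Reading off signs then gives the theorem immediately: at a local maximum, $\Xi''(t^*)\le 0$, so in fact $\Xi''(t^*)<0$ and therefore $\Xi(t^*)>0$; at a local minimum, $\Xi''(t^*)>0$ and therefore $\Xi(t^*)<0$. No further computation is needed once the identity is in hand, and in particular the result that $\xi_{(j)}(s)$ is well approximated near $t_j$ plays no role at this step.

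The delicate point, and the one I expect to be the main obstacle, is the tacit assumption $\Xi(t^*)\neq 0$ used when dividing by $\Xi(t^*)$ in the identity. Under the Riemann hypothesis the zeros of $\Xi$ are exactly the real ordinates $\pm t_n$, and the product representation exhibits each factor $t_n^2-t^2$ as vanishing to first order at $\pm t_n$; a direct logarithmic differentiation then gives $\Xi'(\pm t_n)\neq 0$ provided all $t_n$ are distinct, so no zero of $\Xi$ is simultaneously a critical point and the argument goes through. However, if $t_j=t_k$ for some $j\neq k$ (a repeated nontrivial zero), one gets a critical point at which $\Xi$ itself vanishes and the displayed identity is not directly applicable. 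Since simplicity of the nontrivial zeros of $\zeta$ is not known even assuming RH, this case would have to be handled either by adding simplicity as an extra hypothesis, or by a limiting argument on a neighborhood: on a punctured neighborhood of such a $t^*$ the identity continues to apply and pins down the sign structure of $\Xi$, and one can then attempt to pass to $t^*$ by continuity.
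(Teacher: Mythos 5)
Your proposal is correct and takes essentially the same route as the paper: the identity you evaluate at the critical point is just the paper's relation $\Xi''/\Xi-(\Xi'/\Xi)^2=-2\sum_{n>0}(t^2+t_n^2)/(t^2-t_n^2)^2$ rewritten in terms of $a(t)$ and $b(t)$, and setting $\Xi'(t^*)=0$ and reading off signs is exactly the paper's argument. Your closing caveat about the division by $\Xi(t^*)$ when an extremum coincides with a (necessarily multiple) zero is a genuine subtlety the paper silently passes over, but it does not alter the fact that the core approach is identical.
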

\begin{proof}
A local extremum of $\Xi(t)$ occurs at $t=t^*$ such that $\Xi'(t^*)=0$.  Then, from (\ref{diffs-of-txi}) we find that the following relation holds at all extremum points $t^*$:
\begin{align}\label{2nd-dif-txi-to-txi}
\frac{\Xi''(t^*)}{\Xi(t^*)}&=-2\sum_{n>0}\frac{{t^*}^2+t_n^2}{({t^*}^2-t_n^2)^2}<0,
\end{align}
which implies that if $\Xi(t^*)$ is a local maximum (i.e., if $\Xi''(t^*)<0$), then $\Xi(t^*)>0$.  Likewise, if $\Xi(t^*)$ is a local minimum (i.e., if $\Xi''(t^*)>0$), then $\Xi(t^*)<0$.  Thus, we have proved the theorem.
\end{proof}

It is important to note that the inequality in (\ref{2nd-dif-txi-to-txi}) does not depend on the location of the extremum points $t^*$ vis-\`{a}-vis the zeros $t_n$'s. The above lemma asserts that positiveness of all local maxima and negativeness of all local minima of $\Xi(t)$ form a necessary condition for the Riemann hypothesis to be true. \\

\subsection{An illustrative example}
\begin{figure}
\centering
\begin{minipage}{.5\textwidth}
  \centering
  \includegraphics[scale=0.7]{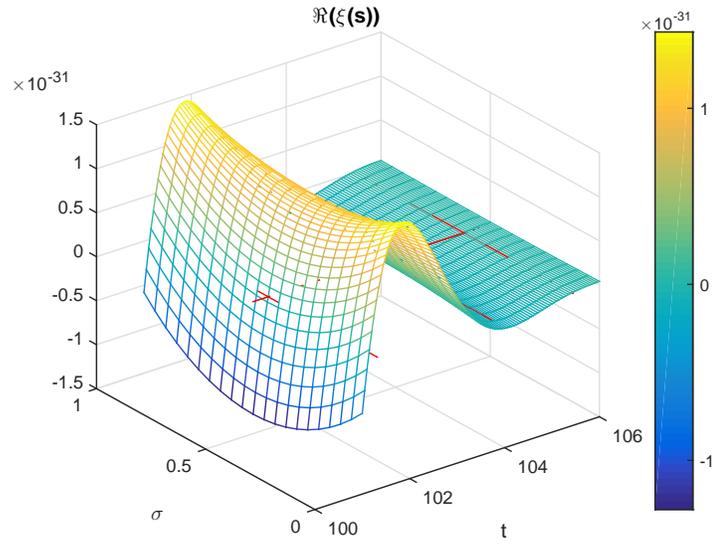}\\
   {\sf(a)}
 \end{minipage}
 \begin{minipage}{.5\textwidth}
  \centering
  \includegraphics[scale=0.7]{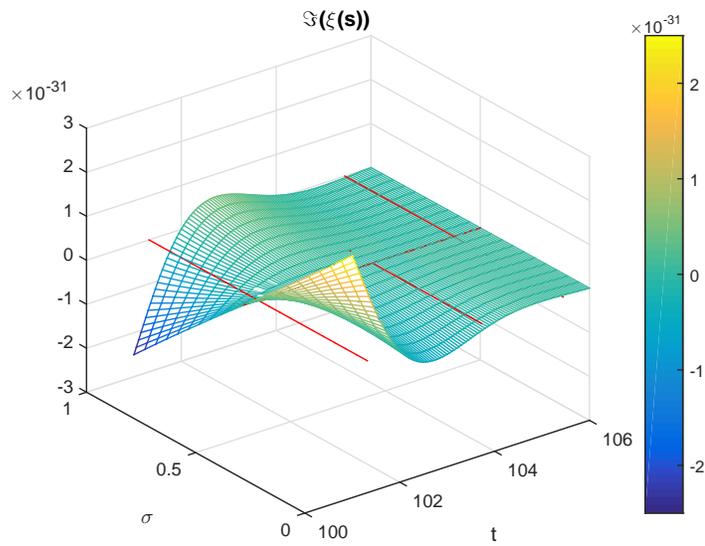}\\
   {\sf(b)}
  \end{minipage}
\caption{\sf (a) $\Re(\xi(\sigma+it))$ and (b) $\Im(\xi(\sigma+it))$,  for $0<\sigma< 1.0$ and $101\leq t\leq 106$.}
\label{fig-surf}
\end{figure}
In this subsection, we present some numerical example. Let us consider the range $101\leq t\leq 106$, in which there are three zeros: $t_{30}=101.3178\ldots, t_{31}=103.7255\ldots$ and $t_{32}=105.4466\ldots$.  Figure \ref{fig-surf} (a) and (b) are surface plots of the real and imaginary parts of the function $\xi(s)$ in the critical zone $0<\sigma<1$ (or equivalently $-\tfrac{1}{2}<\lambda<\tfrac{1}{2}$).  Here, the critical line $\sigma=\tfrac{1}{2}$ (i.e., $\lambda=0$) and the three lines $t=t_{30}$, $t=t_{31}$ and $t=t_{32}$ that pass through these zeros are shown in magenta.  The magnitude of the function is in the order $O(10^{-31})$ near $t=t_{31}$, but it rapidly decreases down to $O(10^{-32})$ and $O(10^{-33})$ at $t=t_{31}$ and $t=t_{32}$, respectively.

In Figure \ref{fig-Xi} (a), we show a plot of the function $\Xi(t)$, which clearly crosses zero at $t_{30}$, $t_{31}$ and $t_{32}$, and local maxima are positive and local minima are negative.  In Figure \ref{fig-Xi} (b), we plot $a(t)=-\tfrac{1}{2}\Xi''(t)$ (in green) and $b(t)=-\Xi'(t)$ (in red) together with $\Xi(t)$ (in blue).
\begin{figure}
\centering
\begin{minipage}{.5\textwidth}
  \centering
  \includegraphics[scale=0.7]{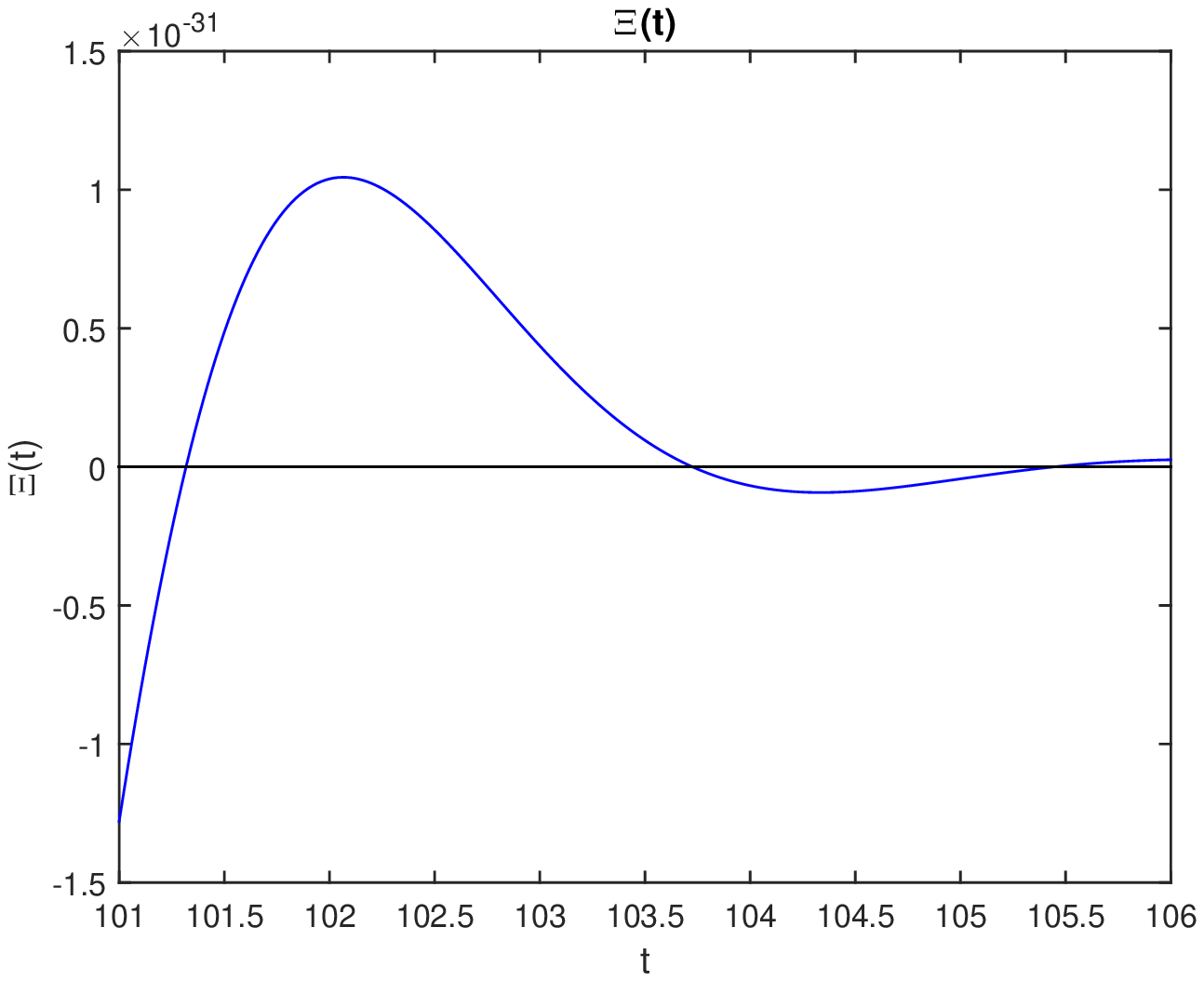}\\
   {\sf(a)}
 \end{minipage}
 \begin{minipage}{.5\textwidth}
  \centering
  \includegraphics[scale=0.7]{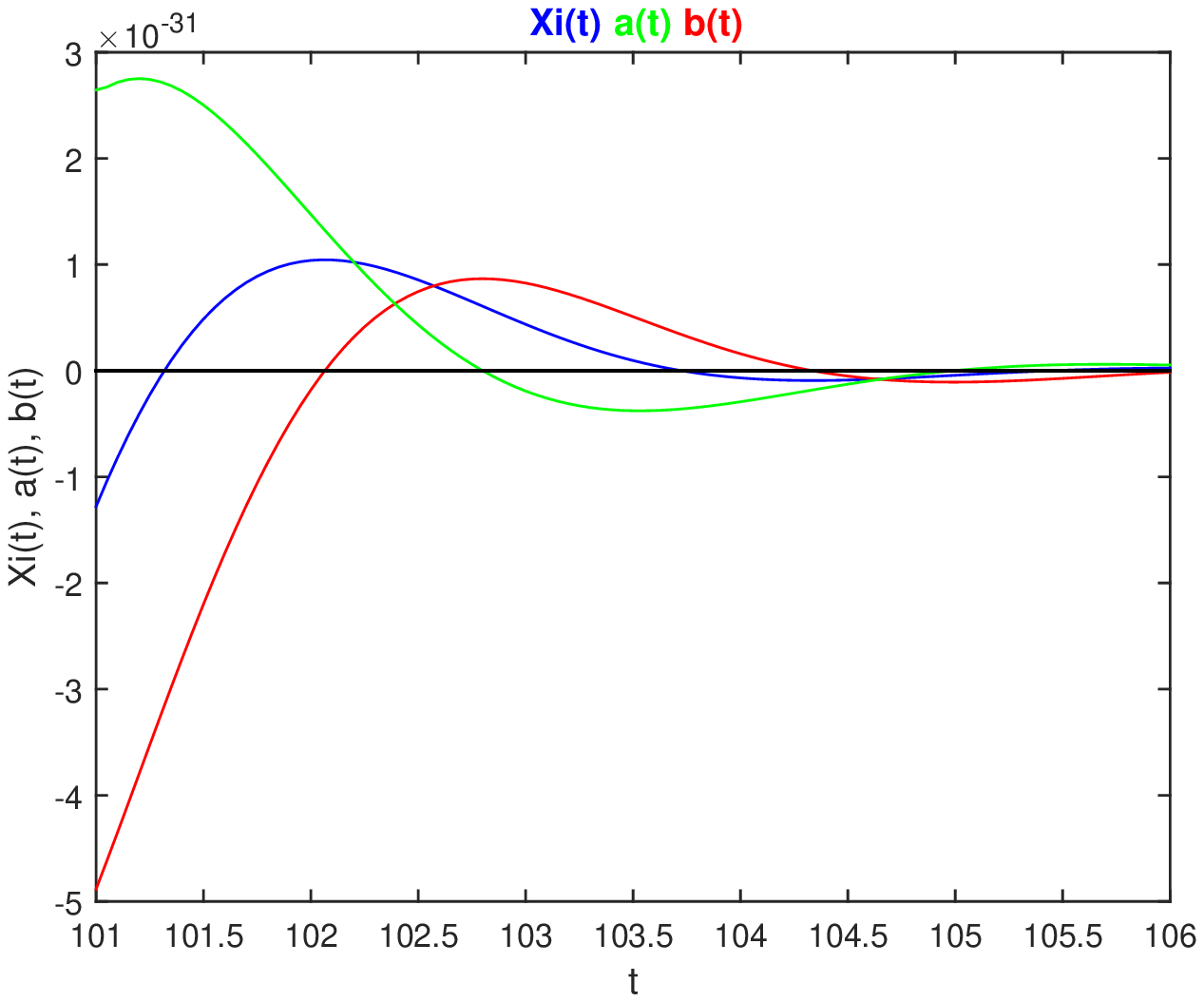}\\
   {\sf(b)}
  \end{minipage}
\caption{\sf (a) $\Re(\xi(\sigma+it))$ and (b) $\Im(\xi(\sigma+it))$,  for $0<\sigma< 1.0$ and $101\leq t\leq 106$.}
\label{fig-Xi}
\end{figure}

Figure \ref{fig-cross} (a) shows the cross-section of $\xi(\sigma+it_{30})$ on the line $t=t_{30}=101.3178\ldots$ across the critical strip $0<\sigma<1$ (or $-\tfrac{1}{2}<\lambda<\tfrac{1}{2}$).  The parabola (in blue) is the real part, and the straight line (in red), the imaginary part. In the figure (b) we show the real part (in blue) and the imaginary part (in red) of $\xi_{30}(\sigma+it_{30})$, which was calculated by dividing $\xi(s)$ by the 30th factor $f_{30}(s)$ in the product-form representation (\ref{jth-term}).
The figure (c) is the real part of the cross-section of the 30th factor, and the figure (d) is the imaginary part of this factor. In (d) we plot the real part again in blue, but because it is so small (i.e., on the order of $O(10^{-5})$ compared with the imaginary part, which is $(O^{-2})$, the blue line is indistinguishable from the horizontal axis.

\begin{figure}
\centering
\begin{minipage}{.5\textwidth}
  \centering
  \includegraphics[scale=0.6]{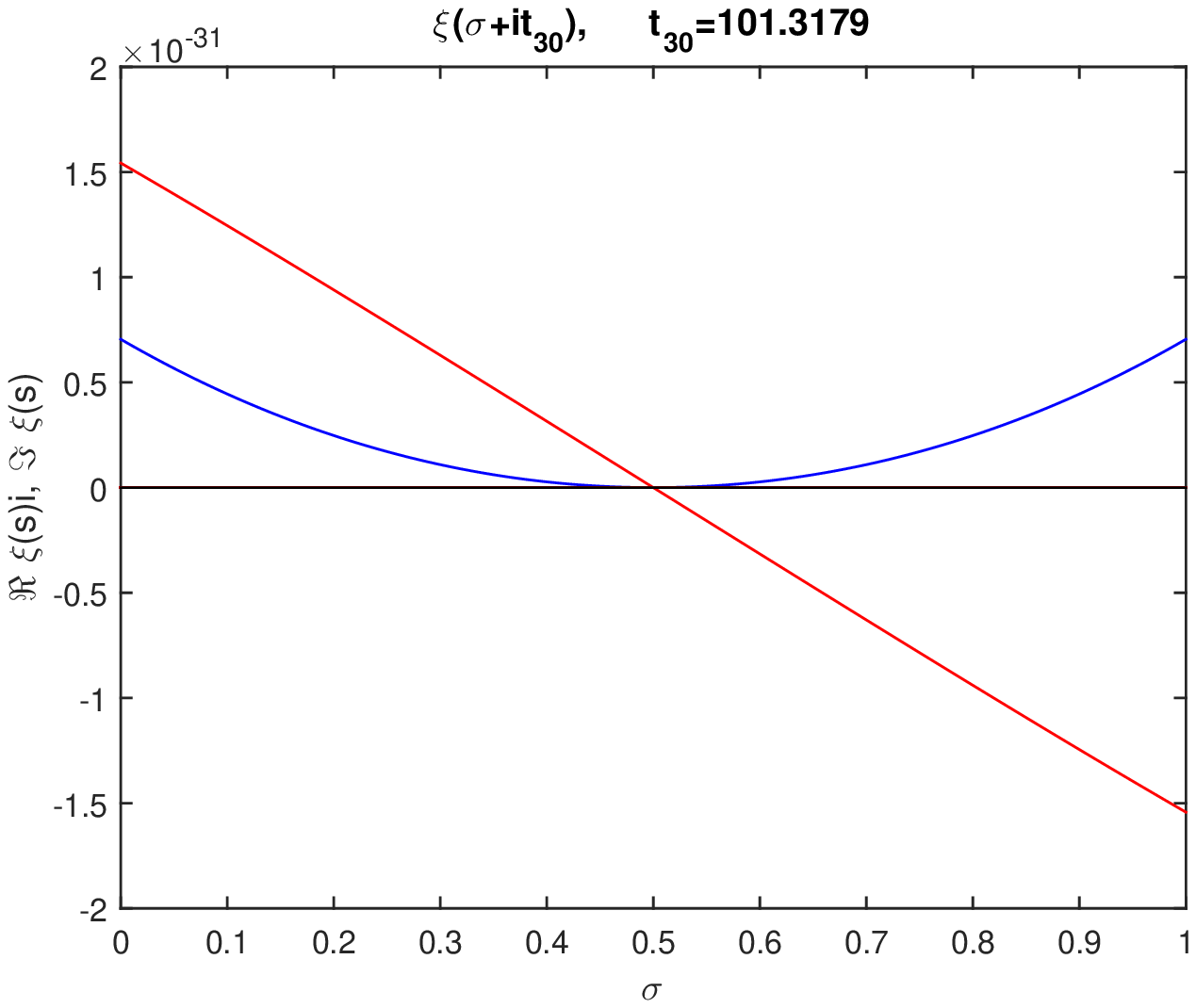}\\
   {\sf(a)}
 \end{minipage}%
 \begin{minipage}{.5\textwidth}
  \centering
  \includegraphics[scale=0.6]{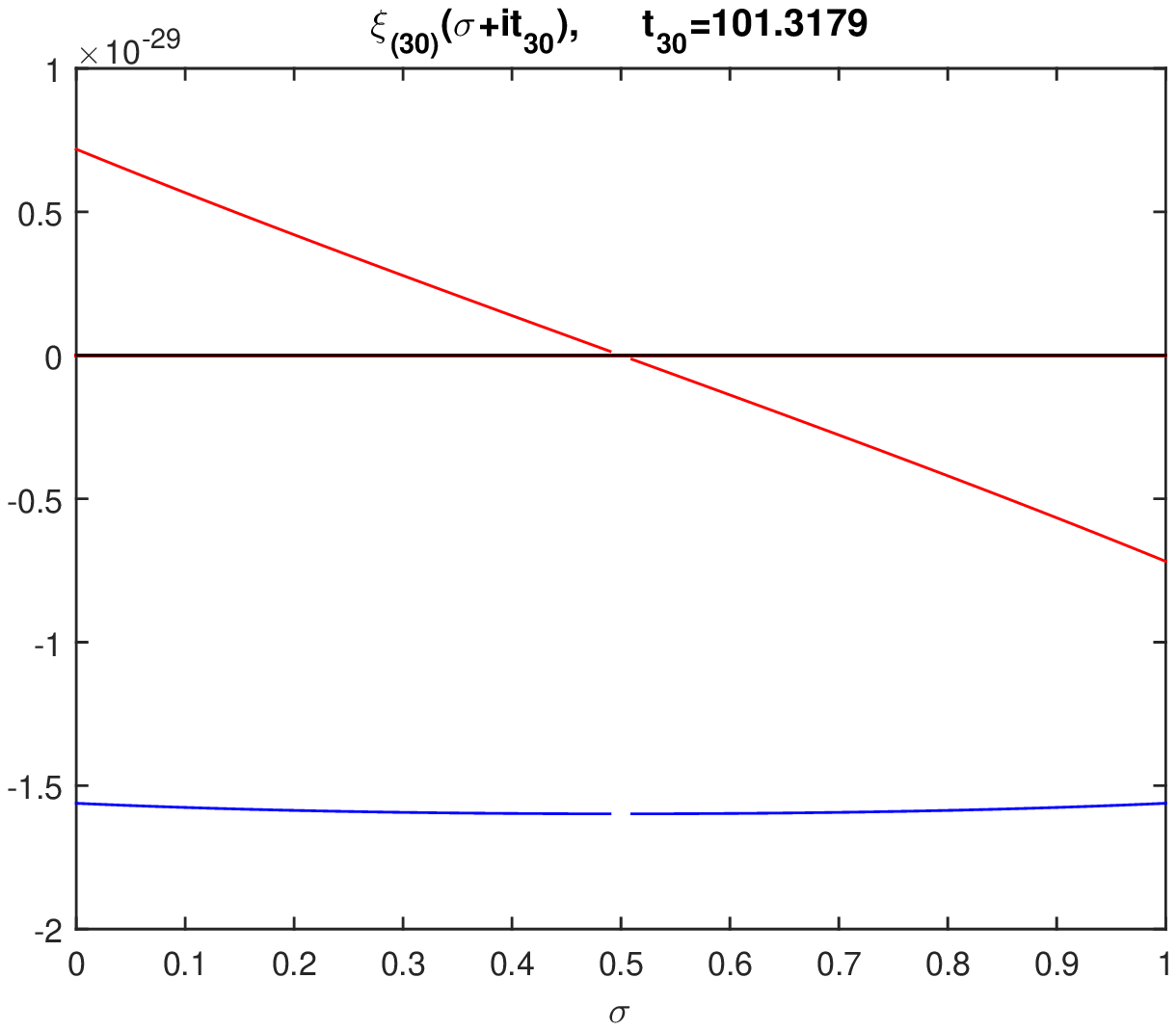}\\
   {\sf(b)}
  \end{minipage}
\begin{minipage}{.5\textwidth}
  \centering
  \includegraphics[scale=0.6]{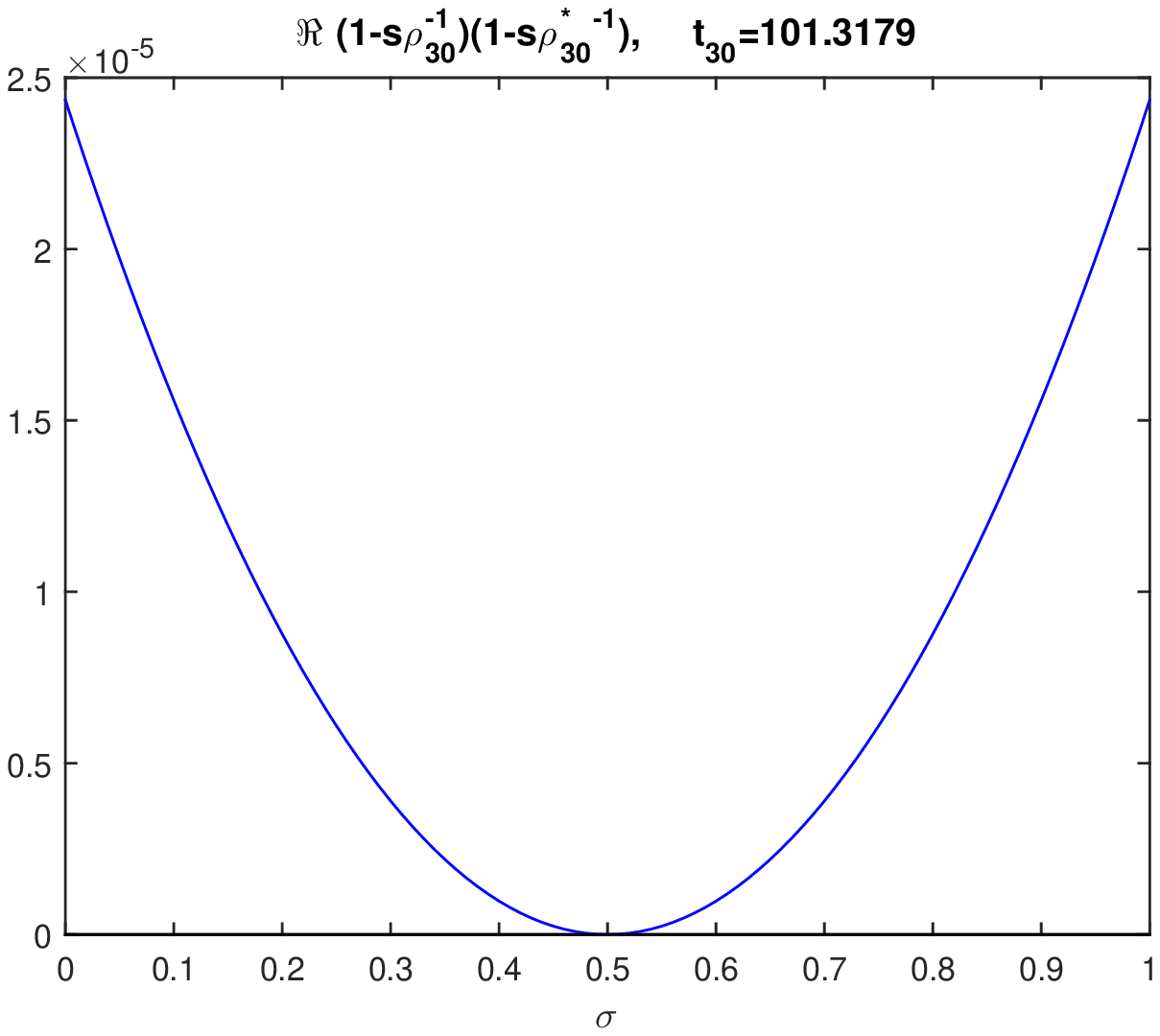}\\
   {\sf(c)}
 \end{minipage}%
 \begin{minipage}{.5\textwidth}
  \centering
  \includegraphics[scale=0.6]{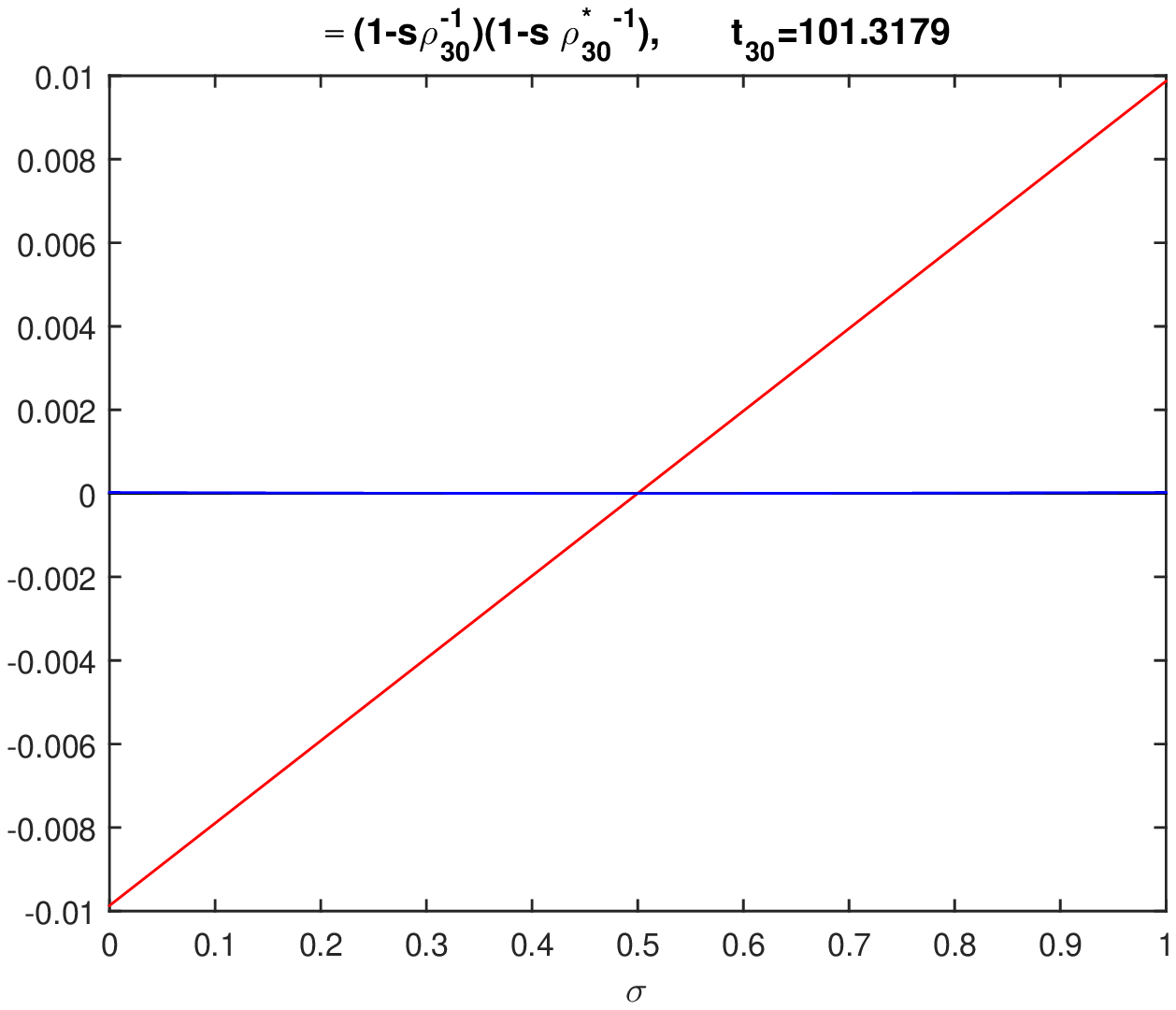}\\
   {\sf(d)}
  \end{minipage}
\caption{\sf (a) The cross section of $\xi(\sigma+it_{30})$ for $0<\sigma< 1.0$, where $t_{30}=101.3178\ldots$; (b) The cross section of $\xi_{(30)}(\sigma+it_{30})$; (c) The real part of the 30th factor $f_{30}(s)$ for $0<\sigma< 1.0$, where $s=\sigma+it_{30}$, $\rho_{30}=\tfrac{1}{2}+it_{30}$, and $\rho_{30}^*=\tfrac{1}{2}-it_{30}$; (d) The imaginary part (in red) and the imaginary part (in blue) of the 30th factor.}
\label{fig-cross}
\end{figure}
\section{Properties of local extrema of $\Xi(t)$ as a sufficient condition for the Riemann hypothesis}
In this section we shall investigate the inverse of Theorem \ref{theorem-necessity-RH}.
Let us start with the following lemma:
\begin{lemma}\label{lemma-saddle-point}(Local extrema of $\Xi(t)$ and saddle points)

Any extremum point of $\Xi(t)$ is a saddle point of the function $\Re\{\xi(s)\}$.
\end{lemma}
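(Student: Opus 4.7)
The plan is to combine the harmonicity of $\Re\{\xi(\sigma+it)\}$ with the fact that $\Im\{\xi(\sigma+it)\}$ vanishes identically on the critical line. Write $u(\sigma,t):=\Re\{\xi(\sigma+it)\}$ and $v(\sigma,t):=\Im\{\xi(\sigma+it)\}$. Since $\xi$ is entire, $u$ and $v$ are everywhere harmonic and satisfy the Cauchy-Riemann equations $u_\sigma=v_t$ and $u_t=-v_\sigma$. From (\ref{def-Xi}), one has $u(\tfrac{1}{2},t)=\Xi(t)$ and $v(\tfrac{1}{2},t)\equiv 0$.

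First I would check that $(\tfrac{1}{2},t^*)$ is a critical point of the real-valued function $u$. Because $u(\tfrac{1}{2},t)=\Xi(t)$, the partial $u_t(\tfrac{1}{2},t^*)=\Xi'(t^*)=0$ at any extremum of $\Xi$. For the $\sigma$-partial, Cauchy-Riemann gives $u_\sigma(\tfrac{1}{2},t^*)=v_t(\tfrac{1}{2},t^*)=0$, since $v$ vanishes identically on the critical line.

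Next I would compute the Hessian of $u$ at $(\tfrac{1}{2},t^*)$. Its diagonal entries are $u_{\sigma\sigma}(\tfrac{1}{2},t^*)=2a(t^*)=-\Xi''(t^*)$ by (\ref{def-a-t}), and $u_{tt}(\tfrac{1}{2},t^*)=\Xi''(t^*)$; they sum to zero, as required by harmonicity. Differentiating $u_\sigma=v_t$ in $t$ once more yields $u_{\sigma t}=v_{tt}$, and once again $v(\tfrac{1}{2},t)\equiv 0$ forces $u_{\sigma t}(\tfrac{1}{2},t^*)=0$. Hence the Hessian at $(\tfrac{1}{2},t^*)$ is the diagonal matrix $\mathrm{diag}(-\Xi''(t^*),\,\Xi''(t^*))$, whose principal axes point along the coordinate directions $\partial_\sigma$ and $\partial_t$.

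Whenever the extremum is non-degenerate, i.e.\ $\Xi''(t^*)\neq 0$, the two eigenvalues have opposite signs: $u$ looks like a downward parabola along one coordinate axis and an upward parabola along the other, which is the defining property of a saddle point. The one subtlety is a degenerate extremum with $\Xi''(t^*)=0$, where both Hessian entries vanish and the classical second-derivative test is inconclusive; this is the step I expect to be the main obstacle. I would dispose of it by invoking the maximum principle for harmonic functions---a non-constant harmonic function attains no interior local maximum or minimum---so that any isolated critical point of $u$ is forced to be a saddle irrespective of higher-order degeneracies.
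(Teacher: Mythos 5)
Your proof is correct and, in its main line, is the same as the paper's: both verify that $(\tfrac{1}{2},t^*)$ is a critical point of the harmonic function $u=\Re\{\xi\}$, compute the Hessian there using Laplace's equation for the diagonal ($u_{\sigma\sigma}=-u_{tt}=-\Xi''(t^*)$) and the Cauchy--Riemann equations plus the vanishing of $\Im\{\xi\}$ on the critical line for the off-diagonal entry, and conclude the discriminant is $-\Xi''(t^*)^2\le 0$. Two points of difference are worth noting. First, you kill the mixed partial via $u_{\sigma t}=v_{tt}$ and $v(\tfrac{1}{2},t)\equiv 0$, which is self-contained, whereas the paper routes through $u_{\sigma t}=-v_{\sigma\sigma}$ and cites equation (8) of \cite{report-no-5}; these are equivalent by harmonicity of $v$, but your version needs no external input. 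Second, and more substantively, you handle the degenerate case $\Xi''(t^*)=0$ by the maximum principle: a non-constant harmonic function has no interior local extremum, so any critical point --- degenerate or not --- is a saddle in the sense that $u$ takes values both above and below $u(\tfrac{1}{2},t^*)$ in every neighborhood. This is cleaner and more robust than the paper's argument, which tries to rule out $\Xi''(t^*)=0$ altogether via a symmetry claim that $\partial^3\Re\{\xi(\sigma,t^*)\}/\partial t^3=0$ forces $\Re\{\xi(\sigma,t^*)\}$ to be constant in $\sigma$; that chain of implications is not justified as written (reflection symmetry about $\sigma=\tfrac{1}{2}$ kills odd $\sigma$-derivatives, not the third $t$-derivative). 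Your maximum-principle route sidesteps this entirely and actually proves the stated lemma in full generality.
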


\begin{proof}
Since $\xi(s)$ is a holomorphic function, $\Re\{\xi(s)\}$ is a harmonic function. It is known that every critical point of a harmonic function is a saddle point. To verify this, we apply the so-called discriminant function test (or the second partial derivative test), which is well known in multivariate calculus, that is, to examine whether the determinant of the Hessian matrix of $\Re\{\xi(s)\}$ evaluated at $s=(\sigma, t)=(\tfrac{1}{2}, t^*)$ is negative or not\footnote{In this section, we often write $s=(\sigma, t)$ instead of $s=\sigma+it$ for notational convenience.}:
\begin{align}
D(\tfrac{1}{2},t^*)&=\Xi''(t^*)\cdot \left.\frac{\partial^2\Re\{\xi(\sigma,t)\}}{\partial\sigma^2}\right|_{s=(\frac{1}{2},t^*)}
 -\left(\left.\frac{\partial^2\Re\{\xi(\sigma,t)\}}{\partial\sigma\partial t}\right|_{s=(\frac{1}{2}, t^*)}\right)^2 \nonumber\\
&=-\Xi''(t^*)^2
 -\left(-\left.\frac{\partial^2\Im\{\xi(\sigma,t)\}}{\partial\sigma^2}\right|_{s=(\frac{1}{2},t^*)}\right)^2.
\end{align}
The first term of the last expression was obtained by applying Laplace's equation to the function $\Re\{\xi(\sigma, t)\}$, and the second term was obtained by using the Cauchy-Riemann equation, and this last term is zero because of (8) in \cite{report-no-5}.
Hence, we find
\begin{align}
D(\tfrac{1}{2},t^*)=-\Xi''(t^*)^2\leq 0,
\end{align}

If $\Xi'(t)$ crosses zero upward at $t=t^*$, then $\Xi''(t^*)>0$, and $\Re\{\xi(s)\}$ is a convex function of $t$ and a concave function of $\sigma$ at the point $(\tfrac{1}{2}, t^*)$. Thus, this point is a saddle point of the function $\xi(s)$.

If $\Xi'(t)$ crosses zero downward at $t=t^*$, then $\Xi''(t^*)<0$, and $\Re\{\xi(s)\}$ is a concave function of $t$ and a convex function of $\sigma$ at $(\tfrac{1}{2}, t^*)$, which is again a saddle point of $\xi(s)$.

If $\Xi''(t^*)=0$, the point $(t^*)$ is an inflection point in both
$t$ and $\sigma$ directions.  But since $\Re\{\xi(s)\}$ is symmetric around $\sigma=\tfrac{1}{2}$, we find that
$\displaystyle{\frac{\partial^3 \Re\{\xi(\sigma,t^*)\}}{\partial t^3}=0}$, which implies
$\Re\{\xi(\sigma,t^*)\}=\Re\{\xi(\tfrac{1}{2},t^*)\}=\Xi(t^*)$ for all $\sigma$.  It then follows that $\Xi''(t^*)=0$, if and only if $\Re\{\xi(\sigma,t^*)\}=\Xi(t^*)=0$ for all $\sigma$, which is impossible. Thus, we have shown that all local extremum points of $\Xi(t)$ are saddle points of $\Re\{\xi(\sigma,t)\}$.
\end{proof}

\subsection{When the Riemann hypothesis is false}

Now let us suppose that the Riemann hypothesis is false, i.e., there should exist a pair of zeros $\rho_k$ and $\rho_{k'}$ of the form
\begin{align}\label{rho_k-and rho_k'}
\rho_k=\tfrac{1}{2}+\lambda_k + it_k,~~{\rm and}~~\rho'_k=\tfrac{1}{2}-\lambda_k+ it_k,
\end{align}
and their complex conjugates
\begin{align}\label{rho_conjugates}
\rho_k^*=\rho_{-k}=\tfrac{1}{2}+\lambda_k - it_k,~~{\rm and}~~{\rho'_k}^*=\rho'_{-k}=\tfrac{1}{2}-\lambda_k- it_k,
\end{align}
must be also zeros. We assume, without loss of generality, that $t_k>0$ and $0<\lambda_k<\tfrac{1}{2}$.
Then the product-form expression for the hypothetical $\xi$-function, denoted $\xi_H(s)$, can be written as
\begin{align}\label{xi_H}
\xi_H(s)&=f_k(s)\xi_{(k)}(s),
\end{align}
where $f_k(s)$ is also a hypothetical function, representing the product of the four multiplicative factors in the product-form (\ref{product-form})\footnote{In arriving at the last line in (\ref{f_k(s)}), we used the approximation
\begin{align}
|\rho_k|^2|\rho'_k|^2&=\left[t_k^2+(\tfrac{1}{2}+\lambda_k)^2\right]\left[t_k^2+(\tfrac{1}{2}-\lambda_k)^2\right]\nonumber\\
&=t_k^4+(\tfrac{1}{2}+2\lambda_k^2)t_k^2+(\tfrac{1}{4}-\lambda_k^2)^2=t_k^4(1+O(t_k^{-2}))\approx t_k^4,
\end{align}
which should not affect the validity of the rest of this paper.}:
\begin{align}\label{f_k(s)}
f_k(s)&=\left(1-\frac{s}{\rho_k}\right)\left(1-\frac{s}{\rho'_k}\right)
\left(1-\frac{s}{\rho_k^*}\right)\left(1-\frac{s}{\rho'_{-k}}\right)\nonumber\\
&\approx t_k^{-4}[\lambda^2-\lambda_k^2-(t-t_k)^2+i2(t-t_k)\lambda]\cdot[\lambda^2-\lambda_k^2-(t+t_k)^2+i2(t+t_k)\lambda],
\end{align}
and $\xi_{(k)}(s)$, as defined by (\ref{xi-minus-j}), is obtained by removing the $k$th factor in the product-form (\ref{product-form}):\footnote{One might argue that we should remove another factor, say the $(k+1)$-st factor, since we are multiplying by $f_k(s)$, a polynomial of 4th order in $s$, in (\ref{xi-minus-j}).  But such consideration should not affect the essence of our conclusion of this section, other than $\hat{\Xi}_H(t)$ would change its polarity, and its magnitude by a factor of approximately $t_j/2$.}
\begin{align}\label{xi{(k)}}
\xi_{(k)}(s)=\tfrac{1}{2}\prod_{n\neq k,n>0}\left(1-\frac{s}{\rho_n}\right)\left(1-\frac{s}{\rho_n^*}\right).
\end{align}

As is derived in Appendix A, $\xi_{(k)}(\tfrac{1}{2}+\lambda+it_k)$ can be be given by the following estimate in the vicinity of $t\approx t_k$:
\begin{align}\label{xi-k-approx}
\hat{\xi}_{(k)}(\tfrac{1}{2}+\lambda+it_k)= \alpha_k(\lambda^2-t^2+\gamma_k+2it\lambda),~~{\rm for}~~t\approx t_k,
\end{align}
where $\alpha_k$ and $\gamma_k$ are real constants.
This approximation formula could be improved by including higher order terms $\lambda^3$ (in the imaginary part) and or $\lambda^4$ terms (in the real part).

Thus, (\ref{xi_H}) can be estimated for $t\approx t_k$ by
\begin{align}\label{approx_xi_H}
\hat{\xi}_H(s)=\alpha_k(\lambda^2-t^2+\gamma_k+2it\lambda)f_k(s),
\end{align}
from which we see that the real part is an even function of $\lambda$ and is zero exactly at $\lambda=\pm\lambda_k$, and the imaginary part is an odd function of $\lambda$ and is zero at $\lambda=\pm\lambda_k$ and $\lambda=0$ (see Appendix B).

By setting $\lambda=0$ in (\ref{xi_H}), we find
\begin{align}\label{Xi-product-form}
\Xi_H(t)=\xi_H(\tfrac{1}{2}+it)=\xi_{(k)}(\tfrac{1}{2}+it)f_k(\tfrac{1}{2}+it),
\end{align}
which can be approximated in the vicinity of $t=t_k$ by
\begin{align}\label{approx_Xi-product-form}
\hat{\Xi}_H(t)=\xi_H(\tfrac{1}{2}+it)=-\alpha_k(t^2-\gamma_k) t_k^{-4}[(t-t_k)^2+\lambda_k^2][(t+t_k)^2+\lambda_k^2],
\end{align}
and its value at $t=t_k$:
\begin{align}\label{Xi-t_k}
\hat{\Xi}_H(t_k)=-\alpha_kt_k^{-4}\lambda_k^2(4t_k^2+\lambda_k^2)(t_k^2-\gamma_k)
= -4\alpha_k\lambda_k^2t_k^{-2}(t_k^2-\gamma_k)+O(t_k^{-4}).
\end{align}

By taking the absolute value of both sides in (\ref{Xi-product-form}), taking their logarithms and differentiating them w.r.t. to $t$, we find
\begin{align}\label{log-dif-absXi}
\frac{|\hat{\Xi}_H(t)|'}{|\hat{\Xi}_H(t)|}=\frac{2(t-t_k)}{(t-t_k)^2+\lambda_k^2}+\frac{2(t+t_k)}{(t+t_k)^2+\lambda_k^2}+\frac{2t}{t^2-\gamma_k}.
\end{align}
Note that $\displaystyle{\frac{|f(t)|'}{|f(t)|}=\frac{f'(t)}{f(t)}}$ for any real function $f(t)$ at $f(t)\neq 0$. Thus, the evaluation of the above at $t=t_k$ yields
\begin{align}\label{Xi'-to-Xi}
\frac{\hat{\Xi}_H'(t_k)}{\hat{\Xi}_H(t_k)}&=\frac{4t_k}{4t_k^2+\lambda_k^2}+\frac{2t_k}{t_k^2-\gamma_k}
=t_k^{-1}\frac{(3t_k^2-\gamma_k)}{(t_k^2-\gamma_k)}+O(t_k^{-2}),
\end{align}
which, together with (\ref{Xi-t_k}), gives
\begin{align}
\hat{\Xi}_H'(t_k)=-36\alpha_k\lambda_k^2t_k^{-3}(3t_k^2-\gamma_k)+O(t_k^{-2}),
\end{align}
which should be equal to $-b(t_j)$.  Since the above is not exactly zero, the point $t=t_k$ is not a local extremum point of $\hat{\Xi}_H(t)$.

By differentiating (\ref{log-dif-absXi}) once more, we find
\begin{align}
\frac{\hat{\Xi}_H''(t)}{\hat{\Xi}_H(t)}-\left(\frac{\hat{\Xi}_H'(t)}{\hat{\Xi}_H(t)}\right)^2
&=\frac{2}{(t-t_k)^2+\lambda_k^2}-\left(\frac{2(t-t_k)}{(t-t_k)^2+\lambda_k^2}\right)^2
 +\frac{2}{(t+t_k)^2+\lambda_k^2}-\left(\frac{2(t+t_k)}{(t+t_k)^2+\lambda_k^2}\right)^2\nonumber\\
&~~ +\frac{2}{t^2-\gamma_k}-\left(\frac{2t}{t^2-\gamma_k}\right)^2+O(t_k^{-3}),
\end{align}
which leads to
\begin{align}\label{Xi''-to-Xi}
\frac{\hat{\Xi}_H''(t_k)}{\hat{\Xi}_H(t_k)}-\left(\frac{\Xi'(t_k)}{\Xi(t_k)}\right)^2+O(t_k^{-3})
 &=\frac{2}{\lambda_k^2}+\frac{2}{4t_k^2+\lambda_k^2}-\frac{16t_k^2}{(4t_k^2+\lambda_k^2)^2}
 +\frac{2}{t_k^2-\gamma_k}-\frac{4t_k^2}{(t_k^2-\gamma_k)^2}\nonumber\\
 &=\frac{2}{\lambda_k^2}+\frac{2}{4t_k^2+\lambda_k^2}+\frac{2}{t_k^2-\gamma_k}-\frac{4t_k^2}{(t_k^2-\gamma_k)^2}
+O(t_k^{-3})\nonumber\\
&=\frac{2}{\lambda_k^2}-\frac{4t_k^2}{(t_k^2-\gamma_k)^2}+O(t_k^{-3}).
\end{align}

Based on what we have found up to this point, we make the following proposition:
\begin{lemma}(A necessary condition for the Riemann hypothesis to be false)\label{lemma-conjecture}

Suppose that we conjecture that a pair of zeros possibly exist on the line $t=t_k$, together with their complex conjugates on the line $t=-t_k$.  A necessary condition for this conjecture to be true (i.e., for the Riemann hypothesis to fail) is to show that the value of $\displaystyle{\frac{\Xi''(t_k)}{\Xi(t_k)}}$ is strictly positive.
\end{lemma}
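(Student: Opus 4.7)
The plan is to use the asymptotic identity (\ref{Xi''-to-Xi}), which has already been derived above under the hypothesized zero configuration (\ref{rho_k-and rho_k'})--(\ref{rho_conjugates}), the approximation (\ref{xi-k-approx}) for $\xi_{(k)}$, and the explicit factorization (\ref{f_k(s)}) of $f_k$. The entire argument reduces to extracting a definite sign from the right-hand side of that identity; no additional analytic machinery beyond what is already developed in Appendix~A appears to be required.

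First I would solve (\ref{Xi''-to-Xi}) for the target ratio,
\begin{align}
\frac{\hat{\Xi}_H''(t_k)}{\hat{\Xi}_H(t_k)}=\left(\frac{\Xi'(t_k)}{\Xi(t_k)}\right)^{2}+\frac{2}{\lambda_k^2}-\frac{4t_k^2}{(t_k^2-\gamma_k)^2}+O(t_k^{-3}),
\end{align}
and then bound the three explicit terms on the right individually. The squared logarithmic-derivative term is trivially nonnegative. Because the hypothetical off-critical zero is constrained by $0<\lambda_k<\tfrac{1}{2}$, the second term $2/\lambda_k^2$ is bounded below by $8$, a fixed positive constant that does not decay with $t_k$. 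Finally, for $t_k\gg 1$ the subtractive term $4t_k^2/(t_k^2-\gamma_k)^2$ expands as $4t_k^{-2}+O(t_k^{-4})$ and is therefore asymptotically negligible against the uniform lower bound on $2/\lambda_k^2$. Combining these three estimates yields strict positivity of the right-hand side for all sufficiently large $t_k$, and hence, via (\ref{Xi-t_k}) and (\ref{Xi'-to-Xi}) to tie $\hat{\Xi}_H$ back to $\Xi$, the claimed strict positivity of $\Xi''(t_k)/\Xi(t_k)$.

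The step I expect to be the principal obstacle is a rigorous control of the residual $O(t_k^{-3})$: it was generated by replacing $\xi_{(k)}$ with the quadratic-in-$\lambda$ approximation $\hat{\xi}_{(k)}$ in (\ref{xi-k-approx}), and one must verify that after two differentiations in $t$ and evaluation at $t=t_k$ the discarded higher-order corrections in $\lambda$ and in $(t-t_k)$ do not contaminate the leading constant $2/\lambda_k^2$. A secondary, related, subtlety is that the conclusion as stated is asymptotic in $t_k$, matching the paper's overall ``$t\gg 1$'' qualification; a fully non-asymptotic statement would require sharpening the remainder in (\ref{xi-k-approx}) and tracking the finite-$t_k$ effect of the approximation $|\rho_k|^2|\rho'_k|^2\approx t_k^4$ used in (\ref{f_k(s)}). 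Neither of these difficulties seems to threaten the qualitative conclusion, but both must be handled to make the sign statement unconditional within the stated asymptotic regime.
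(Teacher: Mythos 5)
Your proposal takes a genuinely different route from the paper, and it has a real gap at exactly the step you flag as ``asymptotically negligible.'' The paper does not prove this lemma from the identity (\ref{Xi''-to-Xi}) at all. It argues geometrically: by Laplace's equation, $\left.\partial^2\Re\{\xi(\tfrac{1}{2}+\lambda+it_k)\}/\partial\lambda^2\right|_{\lambda=0}=-\Xi''(t_k)$, and by the reflective symmetry of $\Re\{\xi\}$ about the critical line the cross-section in $\lambda$ has an extremum at $\lambda=0$ with value $\Xi(t_k)$. If off-line zeros sit at $\lambda=\pm\lambda_k\neq 0$ on the line $t=t_k$, that cross-section must vanish there; a concave arc (the case $\Xi''(t_k)>0$) with its maximum at $\lambda=0$ can reach zero at $\pm\lambda_k$ only if that maximum $\Xi(t_k)$ is strictly positive, and dually in the convex case. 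Hence $\Xi''(t_k)/\Xi(t_k)>0$, with no asymptotics in $t_k$ and no appeal to the constants $\alpha_k,\gamma_k$.

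The gap in your version: you expand $4t_k^2/(t_k^2-\gamma_k)^2$ as $4t_k^{-2}+O(t_k^{-4})$, which tacitly assumes $\gamma_k=o(t_k^2)$. That is false. The constant $\gamma_k$ in the local fit (\ref{xi-k-approx}) satisfies $\alpha_k(\gamma_k-t_k^2)=\xi_{(k)}(\tfrac{1}{2}+it_k)$, so $\gamma_k$ tracks $t_k^2$; in the paper's own numerics $\gamma_{30}=1.0491\times 10^4$ against $t_{30}^2\approx 1.0265\times 10^4$, and the term evaluates to about $0.806$ --- an $O(1)$ quantity, which is precisely why the paper carries it explicitly in the last line of (\ref{Xi''-to-Xi}) instead of absorbing it into the $O(t_k^{-3})$ remainder. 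Your sign conclusion therefore needs $2/\lambda_k^2>4t_k^2/(t_k^2-\gamma_k)^2$, i.e.\ a lower bound $|t_k^2-\gamma_k|>\sqrt{2}\,\lambda_k t_k$, which is nowhere established and does not follow from anything in the paper; if $\gamma_k$ happens to lie within $O(t_k)$ of $t_k^2$ the subtracted term can dominate $2/\lambda_k^2$ and the argument collapses. (Your first two bounds --- nonnegativity of the squared logarithmic derivative and $2/\lambda_k^2\geq 8$ --- are fine.) To salvage your route you would have to control $|t_k^2-\gamma_k|$ from below, for instance by relating it to $\hat{\Xi}_H(t_k)$ through (\ref{Xi-t_k}); the paper's Laplace-equation argument sidesteps the issue entirely and is moreover non-asymptotic, which better matches the way the lemma is stated.
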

\begin{proof}
Suppose $\Xi''(t_k)>0$.  Then, the function $\Xi(t)$ is a convex function of $t$ in the vicinity of $t=t_k$. From Laplace's equation  $\displaystyle{\frac{\partial^2\Re\{\xi(\tfrac{1}{2}+\lambda+it_k)\}}{\partial\lambda^2}=2a(t_k)=-\Xi''(t_k)<0}$, thus the cross-section of $\Re\{\xi(\tfrac{1}{2}+\lambda+it_k)\}$ along $t=t_k$ is a concave function of $\lambda$, and is a parabola in the range $\displaystyle{|\lambda|<\tfrac{1}{2}}$, with its maximum occurring at $\lambda=0$.  Then, a necessary and sufficient condition for this function to become zero at $\lambda=\pm\lambda_k$, is that the cross-section at $\lambda=0$, which is $\Xi(t_k)$, is strictly positive.  If $\Xi(t_k)=0$, then it implies $\lambda_k=0$, the degenerated case where the two zeros $\pm\lambda_k$ reduce to one on the critical line.

If $\Xi''(t_k)<0$, the above argument should be modified by replacing ``convex'' to ``convex,'' ''maximum'' to ``minimum,'' and ``positive'' to ``negative.''  In either case, the ratio $\Xi''(t_k)/\Xi(t_k)$ must be strictly positive for the Riemann hypothesis to be rejected.
\end{proof}

Unfortunately, $\Xi(t_k)$ is neither a local maximum, nor a local minimum, because $\Xi'(t_k)=-b(t_k)\neq 0$.
Thus, the above lemma cannot be directly applicable to solving the Riemann hypothesis, since we have no way of knowing a possible value of $t_k$, even if such $t_k$ should exist.
As we show in Appendix B, however, there should always exist a local extremum point $t=t^*=t_k-\delta_k$, where $\delta_k$ is strictly positive, as given by (\ref{delta-in-theorem}) below.  Thus, we can make the following assertion:

\begin{theorem}(A sufficient condition for the Riemann hypothesis to hold asymptotically)\label{theorem-sufficient}

If local maxima of $\Xi(t)$ are all nonnegative and its local minima are are all non-positive, then the Riemann hypothesis is asymptotically true, i.e., at $t\gg 1$.
\end{theorem}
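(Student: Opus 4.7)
My plan is to prove the theorem by contraposition: assume that RH fails asymptotically, producing a hypothetical off--critical zero pair at $\tfrac{1}{2}\pm\lambda_k+it_k$ with $t_k\gg 1$ and $0<\lambda_k<\tfrac{1}{2}$, and then exhibit a local extremum of $\Xi(t)$ near $t_k$ that violates the hypothesis of the theorem. Everything needed is already assembled in Section~2.1 and Lemma~\ref{lemma-conjecture}; the job of the proof is to splice these pieces together with the existence result of Appendix~B.

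First, I would reuse the estimate (\ref{Xi''-to-Xi}), which gives $\hat{\Xi}_H''(t_k)/\hat{\Xi}_H(t_k) = 2/\lambda_k^2 - 4t_k^2/(t_k^2-\gamma_k)^2 + O(t_k^{-3})$. Because $\lambda_k<\tfrac{1}{2}$ and $t_k\gg 1$, the first term dominates and the ratio is strictly positive; this is the content of Lemma~\ref{lemma-conjecture}. As observed just after that lemma, $t_k$ itself is not an extremum of $\Xi$, since (\ref{Xi'-to-Xi}) forces $\Xi'(t_k)=-b(t_k)\neq 0$. This is where I would invoke the Appendix~B claim that $\Xi'(t)$ must vanish at a nearby point $t^* = t_k-\delta_k$ with $\delta_k>0$ strictly and small, explicitly given by (\ref{delta-in-theorem}); the vanishing follows because (\ref{Xi'-to-Xi}) fixes the sign of $\hat{\Xi}_H'$ at $t_k$ while the structure of $f_k(s)$ forces the opposite sign a short distance away, so an actual local extremum $t^*$ is trapped between them.

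The third step is to transfer the positivity of $\Xi''/\Xi$ from $t_k$ to the genuine extremum $t^*$. Because the right--hand side of (\ref{Xi''-to-Xi}) depends smoothly on $t$, with its leading contribution $2/\lambda_k^2$ unaffected by a shift of order $\delta_k$ when $t_k$ is large, the inequality $\Xi''(t^*)/\Xi(t^*)>0$ persists. Now I would replay the argument of Theorem~\ref{theorem-necessity-RH} in reverse: at the local extremum $t^*$, strict positivity of the ratio forces either $\Xi''(t^*)>0$ with $\Xi(t^*)>0$ (a strictly positive local minimum) or $\Xi''(t^*)<0$ with $\Xi(t^*)<0$ (a strictly negative local maximum). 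Either alternative contradicts the hypothesis that all local maxima of $\Xi$ are nonnegative and all local minima are non--positive. Hence no off--critical zero pair with $t_k\gg 1$ can exist, which is exactly the asymptotic form of RH.

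The main obstacle I anticipate is the quantitative control required in Steps~2 and~3. One must verify that the shift $\delta_k$ supplied by Appendix~B is small relative to both $\lambda_k$ and $t_k$ so that the strict positivity of $\Xi''/\Xi$ genuinely survives the move from $t_k$ to $t^*$, and that the quadratic--polynomial approximation $\hat{\Xi}_H$ remains faithful to $\Xi_H$ across that $\delta_k$--neighborhood. Concretely, the $O(t_k^{-3})$ remainders in (\ref{Xi''-to-Xi}) and the higher--order corrections suppressed in (\ref{xi-k-approx}) must be dominated by the leading $2/\lambda_k^2$ contribution uniformly as $t_k\to\infty$. This asymptotic domination is precisely where the qualifier $t\gg 1$ in the statement of the theorem is essential: the argument does not control finite--$t_k$ behavior, and strengthening it to a non--asymptotic conclusion would require sharp quantitative bounds on the truncated product $\xi_{(k)}(s)$ that go beyond what Appendix~A currently provides.
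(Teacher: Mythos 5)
Your proposal follows essentially the same route as the paper: assume an off-critical zero pair at $t_k\gg 1$, invoke the sign condition of Lemma \ref{lemma-conjecture} via (\ref{Xi''-to-Xi}), and use the Appendix B extremum $t^*=t_k-\delta_k$ from (\ref{delta-in-theorem}) to produce a positive local minimum or negative local maximum contradicting the hypothesis. In fact your write-up is more complete than the paper's own proof, which consists only of citing the existence of $t^*$ and noting $\delta_k\to 0$, leaving the contraposition and the transfer of the sign of $\Xi''/\Xi$ from $t_k$ to $t^*$ implicit --- the latter being exactly the quantitative gap you correctly flag as the remaining obstacle.
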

\begin{proof}
As is derived in Appendix B, for any $t_k$ where nontrivial zeros off the critical line might exist, there is a point $t^*=t_k-\delta_k$, which is a local extremum of the real function $\Xi(t)$, where
\begin{align}\label{delta-in-theorem}
\delta_k=\frac{3\lambda_k^2}{2t_k}+O(t_k^{-2}).
\end{align}
Thus, it is clear that $\displaystyle{\lim_{t_k\to\infty}\delta_k=0}$.
\end{proof}

\subsection{The example continued}
Let us continue the example of the preceding section.  For the sake of a hypothetical example, let us consider the case $k=30$ (i.e., $t_{30}=101.3178\ldots$), and $\lambda_k=\tfrac{1}{4}$, i.e., we have
\[ \rho_{30}=\frac{3}{4}+it_{30},~~{\rm and}~~\rho'_{30}=\frac{1}{4}+it_{30},\]
and their complex conjugates as zeros of $\xi(s)$.

Figures \ref{fig-hypothetical} (a) and (b) are the real and imaginary parts of the factor function $f_{30}(s)$ (\ref{f_k(s)}).
We find that $\alpha_{30}$ and $\gamma_{30}$ that approximate $\xi_{(30)}(\tfrac{1}{2}+\lambda+it_{30})$ plotted earlier in Figure \ref{fig-cross} (b) are estimated to be
\[ \alpha_{30}=-7.0896\times 10^{-32},~~~{\rm and}~~\gamma_{30}=1.0491\times 10^4.\]
Figure \ref{fig-hypothetical} (c) is a plot of the function (\ref{xi-k-approx}) at $t=t_k$, which is hardly distinguishable from the curves of Figure \ref{fig-cross} (b).  Figure \ref{fig-hypothetical}(d) is the product of the functions plotted in (a), (b) and (c).  Should this hypothetical cross-section in fact materialize, we clearly see that $\frac{1}{4}+it_{30}$ and $\frac{3}{4}+it_{30}$ could be zeros of $\xi_H(s)$.

\begin{figure}
\centering
\begin{minipage}{.5\textwidth}
  \centering
  \includegraphics[scale=0.6]{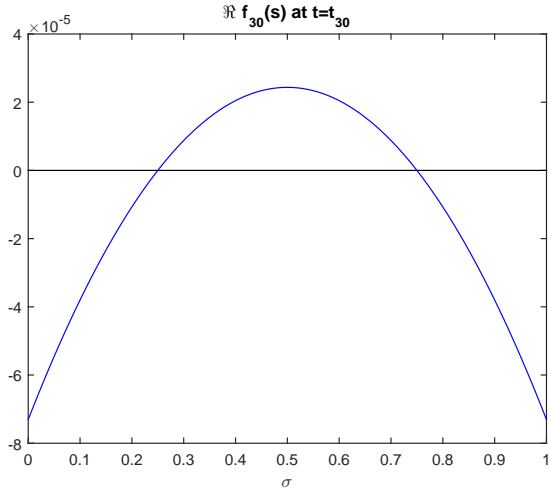}\\
   {\sf(a)}
 \end{minipage}%
 \begin{minipage}{.5\textwidth}
  \centering
  \includegraphics[scale=0.6]{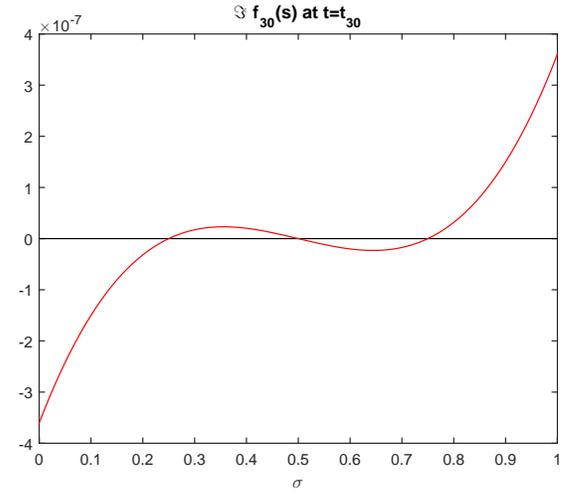}\\
   {\sf(b)}
  \end{minipage}
\begin{minipage}{.5\textwidth}
  \centering
  \includegraphics[scale=0.6]{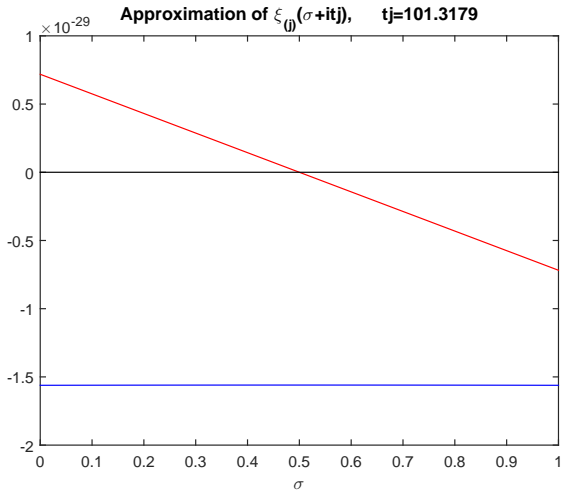}\\
   {\sf(c)}
 \end{minipage}%
 \begin{minipage}{.5\textwidth}
  \centering
  \includegraphics[scale=0.6]{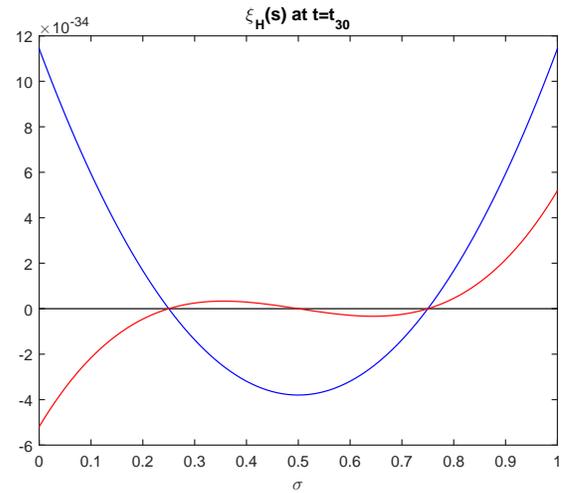}\\
   {\sf(d)}
  \end{minipage}
\caption{\sf (a) The real part of $f_k(s)$ of (\ref{f_k(s)}), $k=30$; (b) The imaginary part of $f_k(s)$ of (\ref{f_k(s)}); (c) The cross section $\alpha_k(\lambda^2-t_k^2+\gamma_k+2it_k\lambda)$ used in (\ref{xi_H}); The real part is in blue and the imaginary part is in red; (d) The cross section of $\hat{\xi}_H(s)$ of (\ref{xi_H}). The real part is in blue and the imaginary part is in red.}
\label{fig-hypothetical}
\end{figure}

\begin{figure}
\centering
\begin{minipage}{.5\textwidth}
  \centering
  \includegraphics[scale=0.6]{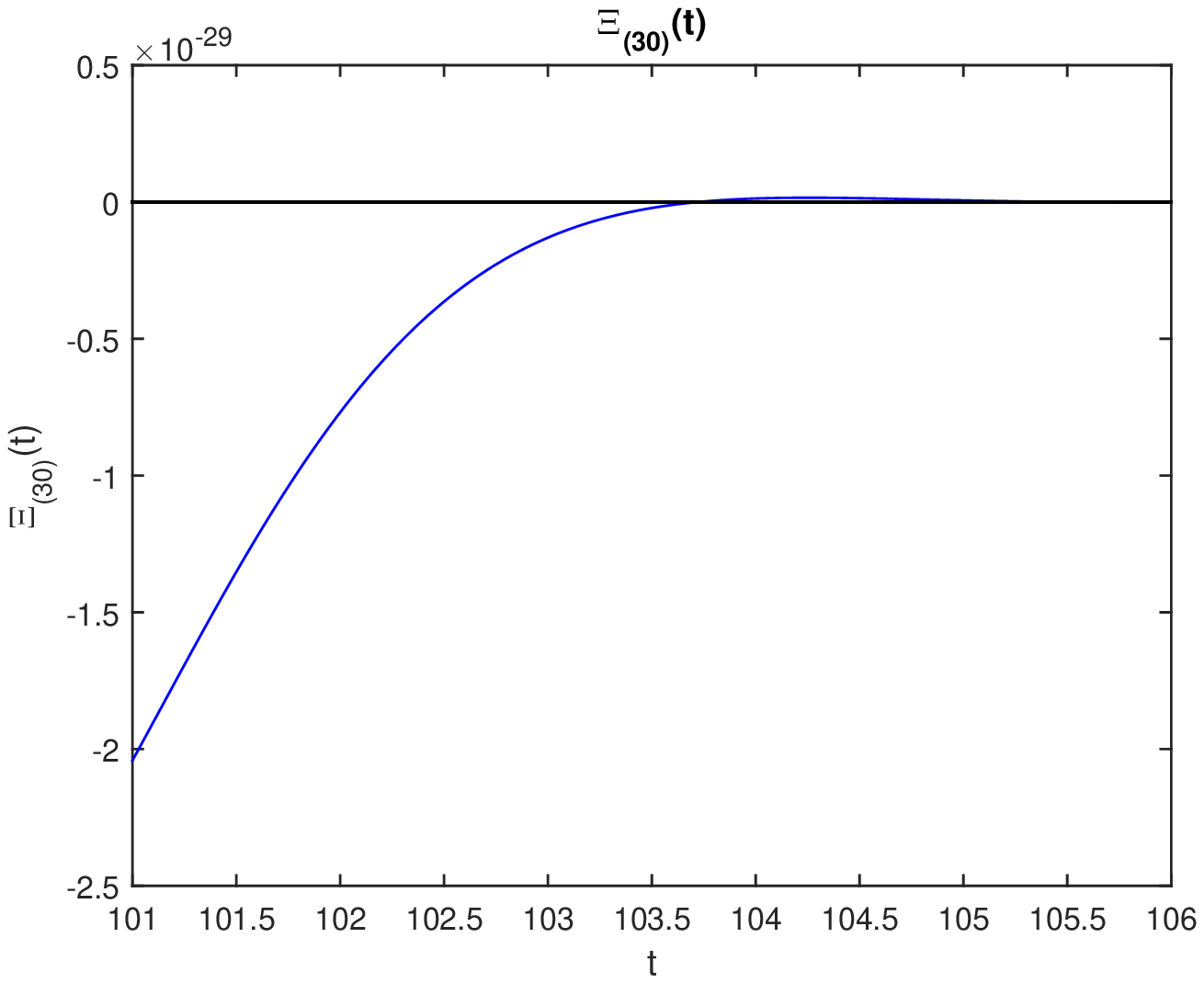}\\
   {\sf(a)}
 \end{minipage}%
 \begin{minipage}{.5\textwidth}
  \centering
  \includegraphics[scale=0.6]{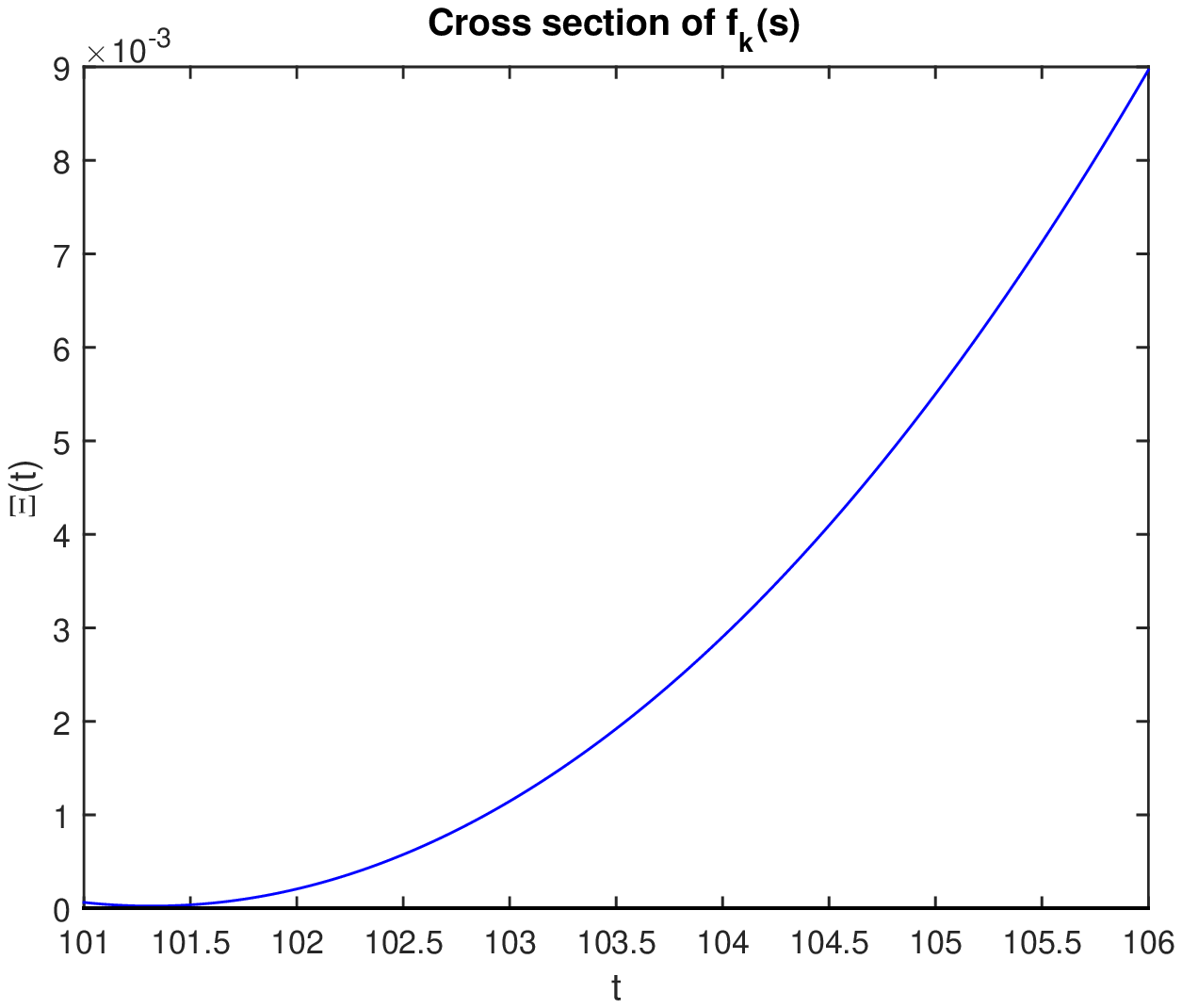}\\
   {\sf(b)}
  \end{minipage}
\begin{minipage}{.5\textwidth}
  \centering
  \includegraphics[scale=0.6]{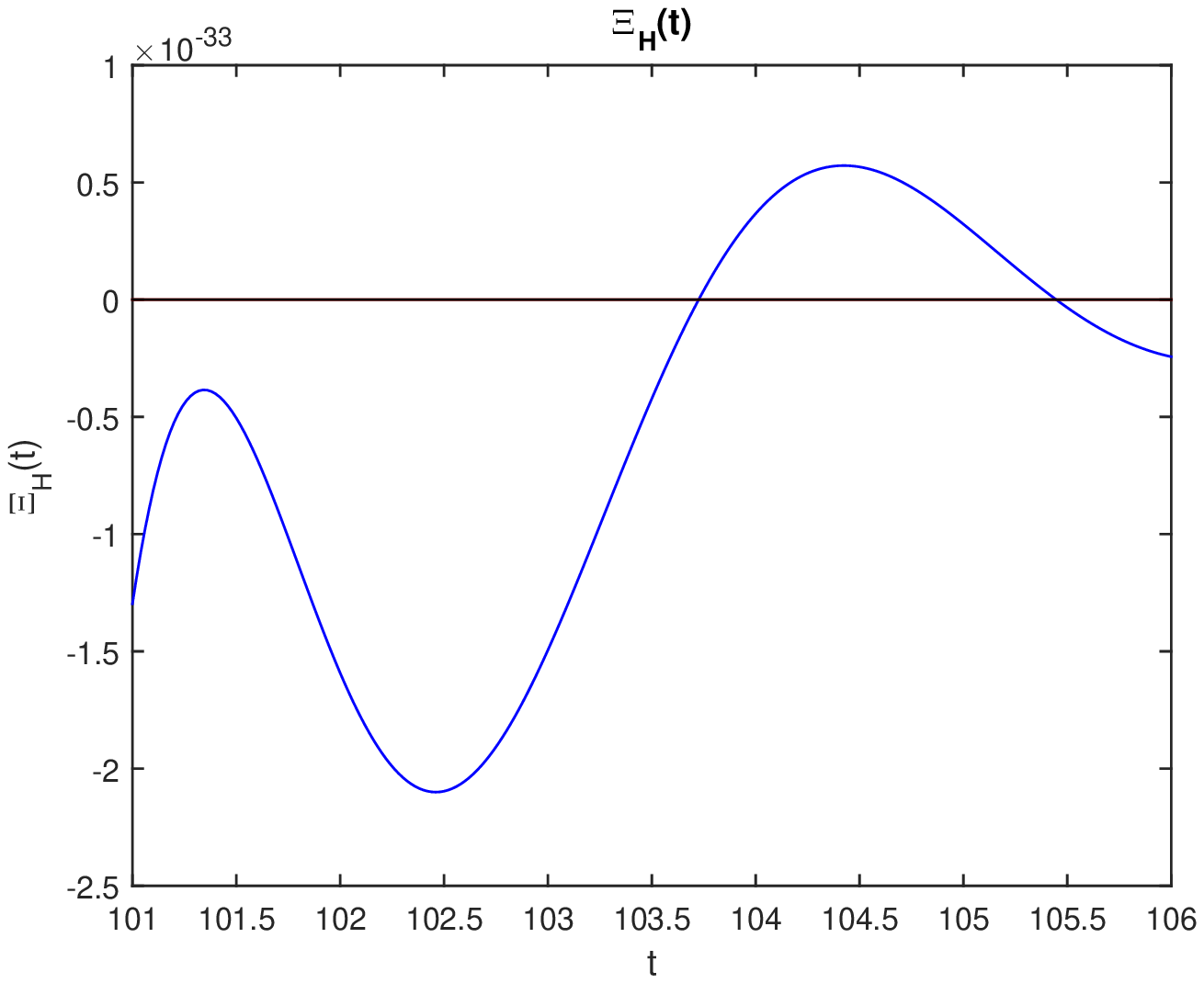}\\
   {\sf(c)}
 \end{minipage}%
 \begin{minipage}{.5\textwidth}
  \centering
  \includegraphics[scale=0.6]{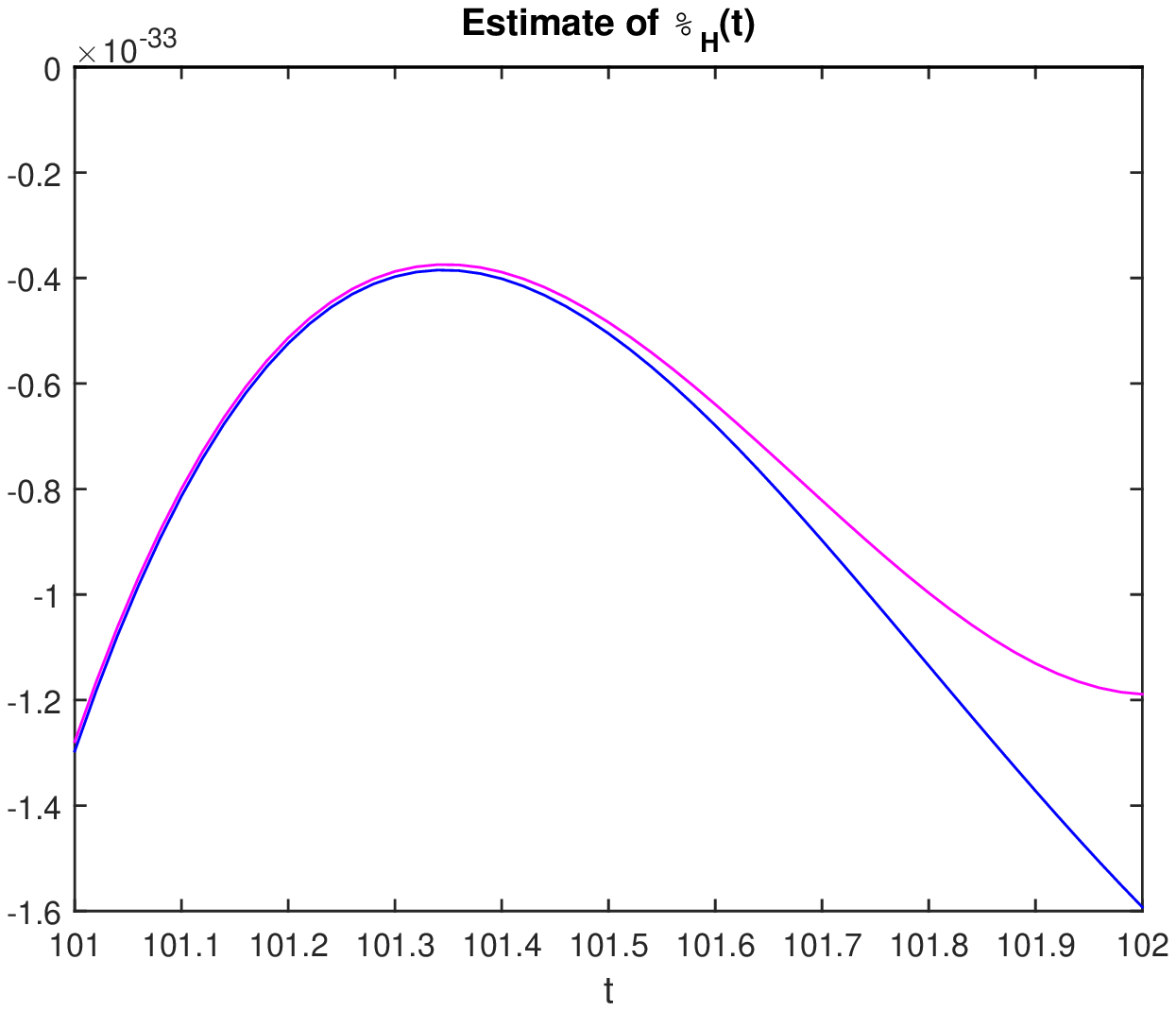}\\
   {\sf(d)}
  \end{minipage}
\caption{\sf (a) $\Xi_{(30)}(t)$, the first term in (\ref{Xi-product-form}); (b) $f_k(\tfrac{1}{2}+it)$, the second term in RHS of (\ref{Xi-product-form}); (c) $\Xi_H(t)$ of (\ref{Xi-product-form}); (d) $\Xi_H(t)$ (in blue) of (\ref{Xi-product-form}), and its approximation $\hat{\Xi}_H(t)$ (magenta) of (\ref{approx_Xi-product-form}) in the vicinity of $t_{30}=101.3178\ldots$.}
\label{fig-Xi-hypo}
\end{figure}

Figure \ref{fig-Xi-hypo} (a) shows $\Xi_{(30)}(t)$ of (\ref{xi{(k)}}), or the first term in the right-hand side (RHS) of (\ref{Xi-product-form}); Figure \ref{fig-Xi-hypo} (b) shows $f_k(\tfrac{1}{2}+it)$, the second term in RHS of (\ref{Xi-product-form}); Figure \ref{fig-Xi-hypo} (c) is $\Xi_H(t)$, the product of the complex functions plotted in (a) and (b); finally Figure \ref{fig-Xi-hypo} (d) show a zoomed-in version of $\Xi_H(t)$ (in blue)(i.e., for the interval $101\leq t \leq 102$), and its approximation $\hat{\Xi}_H(t)$ (in magenta) of $\Xi_{(30)}(t)$ obtained by using the approximation (\ref{xi-k-approx}).
A local maximum close to $t_{30}=101.3178\ldots$ is found from (\ref{delta-in-theorem}) at
\[ t^*=t_{30}-\delta_{30}=101.3178-\frac{3\lambda_{30}^2}{2t_{30}}=101.3178-9.2531\times 10^{-4}=101.3169.\]
From Figure \ref{fig-Xi-hypo}(c) (and its zoomed-in version (d)), we see that the function $\Xi_(t)$ takes its local maximum at $t^*\approx t_{30}$. Its value $\Xi_H(t^*)$ is strictly negative $\approx -0.4$, as seen from both (c) and (d), as well as
from Figure \ref{fig-hypothetical}(d), where it is a local minimum of the cross-section across $\lambda$ or $\sigma$.  This negativity of the local maximum is confirmed by evaluating (\ref{Xi-t_k}), (\ref{Xi'-to-Xi}) and (\ref{Xi''-to-Xi}):
\begin{align*}
\hat{\Xi}_H(t_{30})&=-4\alpha_{30}\lambda_{30}^2t_{30}^{-2}(t_{30}^2-\gamma_{30})
  =-3.8970\times 10^{-34},   \\
\frac{\hat{\Xi}_H'(t_{30})}{\hat{\Xi}_H(t_{30})}&=
  -t_{30}^{-1}\frac{3t_{30}^2-\gamma_{30}}{t_{3}^2-\gamma_{30}}= -0.8879,\\
\frac{\hat{\Xi}_H''(t_{30})}{\hat{\Xi}_H(t_{30})}&=
\left(\frac{\hat{\Xi}_H'(t_{30})}{\hat{\Xi}_H(t_{30})}\right)^2
+\frac{2}{\lambda_{30}^2}-\frac{4t_{30}^2}{t_{30}^2-\gamma_{30}}= 0.7884+32-0.8060=31.9824.
\end{align*}


\section{Concluding Remarks}

In this paper we have presented a new and promising direction towards a possible proof of the Riemann hypothesis, i.e., to show that all local maxima of $\Xi(t)$ are positive and all local minima are negative.  A further investigation along this line of argument will be reported in a forthcoming paper.\\

\noindent
\textbf{Acknowledgments:}  The author is very much indebted to Dr. Pei Chen and Prof. Brian L. Mark for their valuable advices, which helped the author resolve numerous problems he encountered in using MATLAB to obtain the numerical results presented in this paper.

\appendix
\numberwithin{equation}{section}
\renewcommand\thefigure{\thesection.\arabic{figure}}
\counterwithin{figure}{section}
\emph{}

\section{$\mathbf{\xi_{(j)}(s)}$ of (\ref{xi-minus-j})}\label{append-xi-setminus}

The function $\xi_{(j)}(s)$ is defined by (\ref{xi-minus-j}), i.e.,
\begin{align}\label{xi_j-product}
\xi_{(j)}(\tfrac{1}{2}+\lambda+it)=\tfrac{1}{2}\prod_{n\neq j, n>0} \frac{\lambda^2+t_n^2-t^2+2it\lambda}{t_n^2+\frac{1}{4}}.
\end{align}

Let us consider the following three different regimes:
\begin{enumerate}
\item $t_n\gg t\approx t_j$: There are infinitely many such $t_n$'s, but each term of the product form (\ref{xi_j-product}) can be approximated by a real constant, which converges to 1 as $t_n\to\infty$:
\[ \frac{\lambda^2+t_n^2-t^2+2it\lambda}{t_n^2+\frac{1}{4}}\approx 1-\frac{t^2-\lambda^2}{t_n^2}+2i\frac{t\lambda}{t_n^2}\to 1, ~~{\rm as}~~ t_n\to\infty.\]
For instance, for $t\approx t_{10}=49.7738\ldots$ (i.e., $j=10$), the term contributed by $n=30$ (where $t_{30}=101.3178\ldots$) can be approximated by a real constant: the real part of the above expression is $\approx 0.76$, and the imaginary part is within $\pm 5\times 10^{-3}$.  For $n=100$ ($t_{100}=236.7818\ldots$), the real part is $\approx 0.96$ and the imaginary part is much smaller, i.e., within the range of $\pm 0.9\times 10^{-3}$ for $|\lambda|<\tfrac{1}{2}$.

If we set   $j=30$, the term contributed by $n=50$ or $t_{50}=143.1118$ have its real part $\approx 0.5$ and
the imaginary part within $\pm 5\times 10^{-3}$. The term due to $n=200$, or $t_{200}=396.3818$, has the real part $\approx 0.93$ and the imaginary part within $\pm 6.4\times 10^{-4}$.

\item $t_n\sim t\approx t_j$:  Since the distance between two adjacent zeros is on the order of $O(1)$, $\lambda^2$ is negligibly smaller than the rest of the terms.  Thus,
\begin{align}
  \frac{\lambda^2+t_n^2-t^2+2it\lambda}{t_n^2+\frac{1}{4}}
 \approx\frac{t_n^2-t^2}{t_n^2}\left(1+\frac{2it\lambda}{t_n^2-t^2}\right).
 \end{align}
By setting $\displaystyle{a_n(t)=\frac{2t}{t_n^2-t^2}}$, each term is proportional to $(1-ia_n(t)\lambda)$, where $a_n(t)=O(t^{-1})\ll 1$.  Then, the following formula
\begin{align}
&(1+ia_1\lambda)(1+ia_2\lambda)=1-a_1a_2\lambda^2+i(a_1+a_2)\lambda,\nonumber\\
&(1+ia_1\lambda)(1+ia_2\lambda)(1+ia_3\lambda)=1-(a_1a_2+a_2a_3+a_3a_1)\lambda^2+a_1a_2a_3\lambda^4
+i[(a_1+a_2+a_3)\lambda-a_1a_2a_3\lambda^3],\nonumber\\
&(1+ia_1\lambda)(1+ia_2\lambda)(1+ia_3\lambda)(1+ia_4\lambda)=1-(\sum_{i\neq j}a_ia_j)\lambda^2+a_1a_2a_3a_4\lambda^4+i \left[(\sum_n a_n)\lambda-\sum_{i\neq j\neq k}a_ja_ja_k \lambda^3\right],\nonumber\\
& \cdots
\end{align}
Since $a_n\ll 1$ it is clear that the real and imaginary parts of the product can be approximated by a quadratic function and a linear function of $\lambda$, respectively.

\item $t_n\ll t\approx t_j$:  For given $t$, there is only a finite number of terms in this category.
\begin{align} \label{case-3}
  \frac{\lambda^2+t_n^2-t^2+2it\lambda}{t_n^2+\frac{1}{4}}\approx -\frac{t^2}{t_n^2}\left(1-\frac{t_n^2+\lambda^2}{t^2}-\frac{2i\lambda}{t}\right)\approx -\frac{t^2}{t_n^2},
\end{align}
where in the last expression we dropped the imaginary part because $t\gg t_1\gg
\tfrac{1}{2}>|\lambda|$. In this case, the imaginary part is much smaller than the real part.  In the case of $j=30$ (i.e., $t_{30}=101.3178\ldots$, the term for $n=1$ ($t_1=14.1347\ldots$) contributes
$\approx -5.0 \pm i 0.5$, which means that the real part is constant -50 and the imaginary part is a straight line whose maximum is 0.5 at $\lambda=\tfrac{1}{2}$. Similarly we find that the term for $n=10$ (i.e., $t_{10}=49.7738\ldots$) contributes $\approx -3.15 \pm i 0.04$. For $n=20$ (i.e., $t_{20}=77.1448$) the contribution to the product form is $\approx -0.33\pm i 0.017$.  The magnitude of the real part is less than one: this is because the last approximation in (\ref{case-3}) does not hold, because $t_n\ll t_j$ does not hold, although $t_n<t_j$. Nevertheless, this term still contributes as a multiplying real constant in the range $|\lambda|<1$, since the imaginary part is smaller by more than two orders of magnitude.

\end{enumerate}

From the above results, we can conclude that each term in the product form expression (\ref{xi_j-product}) is a real constant ($<1$) when  $t_n\gg t$ (i.e., Case 1); it is a negative constant, whose magnitude is greater than unity, when $t_n \ll t$ (i.e., Case 3).  Consequently, the functional form of $\xi_j(\tfrac{1}{2}+\lambda+it)$ of (\ref{xi_j-product}) is dictated by a finite number of terms whose $t_n$ values are not far from $t$ (i.e., Case 2). It is not difficult to see that a holomorphic function whose cross section along $t=t_j$ has a quadratic function in its real part and a straight line in its imaginary part that cuts through zero is limited to the following function
\begin{align}\label{shape-of-xi_j}
 \alpha_j(\lambda^2-t^2+\gamma_j+2it\lambda).
\end{align}
where $\alpha_j$ and $\gamma_j$ are real constants. It is not difficult to see that this function satisfies the Cauchy-Riemann equations, and Laplace's equation. Furthermore, the function is reflective, just like $\xi(s)$ and $(s-\rho_j)(s-\rho_{j'})$.  If we should include $\lambda^4$ and higher-order terms in the real part, and $\lambda^3$ and nigher order terms in the imaginary part, we could certainly improve the approximation, but for the purpose of our analysis, such generalization would merely complicate the analysis and would not contribute to any additional insight into the heart of the problem we are interested in.

On the critical line $\lambda=0$, each term in the product representation (\ref{xi-minus-j}) is given by
\[ \frac{\lambda^2+t_n^2-t^2+ i 2t\lambda}{t_n^2+\tfrac{1}{4}}=\frac{t_n^2-t^2}{t_n^2+\tfrac{1}{4}}.\]
Hence, the function $\xi_{(j)}(s)$ is real on the critical line, and we denote it by $\Xi_j^*(t)$:
\begin{align}
\Xi_j^*(t)&=\xi_{(j)}(\tfrac{1}{2}+it)
=\tfrac{1}{2}\prod_{n\neq j, n>0}\frac{t_n^2-t^2}{t_n^2+\tfrac{1}{4}}
=\frac{t_j^2+\frac{1}{4}}{t_j^2-t^2} \Xi(t),
\end{align}
where $\Xi(t)$ is defined by (\ref{def-Xi}), which can be written because of the product form (\ref{product-form}) as
\begin{align}\label{Xi-product}
\Xi(t)=\tfrac{1}{2}\prod_n\left(1-\frac{s}{\rho_n}\right)
=\tfrac{1}{2}\prod_{n>0}\frac{\lambda^2+t_n^2-t^2+2it\lambda}{t_n^2+\tfrac{1}{4}}.
\end{align}

The slope of the imaginary part of $\xi(\tfrac{1}{2}+\lambda+it_j)$ crosses the critical line $\lambda=0$ is given by $b(t_j)=-\Xi'(t_j)$ as given in (\ref{def-b-t}).  From (\ref{def-b-t}), we have
\begin{align}
b(t)&=-\Xi'(t)=-2\left(\sum_{n>0}\frac{t}{t-t_n^2}\right) \Xi(t)
=2\left(\sum_{n>0}\frac{t}{t_n^2-t^2}\right)\left(\frac{t_j^2-t^2}{t_j^2+\tfrac{1}{4}}\right)\Xi_j^*(t).
\end{align}
If we set $t=t_j$ all terms in the summation vanish except for the term $n=j$, thus obtaining
\begin{align}
b(t_j)&=\frac{2t_j}{t_j^2+\tfrac{1}{4}}\Xi_j^*(t_j)\approx \frac{2\Xi_j^*(t_j)}{t_j},
\end{align}
which can be alternatively derived by differentiating (\ref{Im-xi}) w.r.t. $\lambda$ and setting $\lambda=0$.

\section{Derivation of an extremum point near $t=t_k$ in the function $\Xi(t)$}

When the imaginary part of the cross section $\xi(\tfrac{1}{2}+\lambda+it_k)$ crosses zero at $\lambda=-\lambda_k, 0$ and $\lambda_k$ for some $\lambda_k$ such that $0<\lambda_k<\tfrac{1}{2}$, which is a necessary condition to disprove the Riemann hypothesis, the slope $b(t)$ at $t=t_k$ is not zero. In other words, $t=t_k$ is not an extremum point of the real function $\Xi(t)$.  In this appendix we derive an extremum point which is close to
$t=t_k$.

By applying the result obtained in Appendix A, we find that the $\xi_{(k)}(s)$ takes the following form for $t_k\gg 1$:
\[ \xi_{(k)}(\tfrac{1}{2}+\lambda+it_k)= \alpha_k(\lambda^2-t^2+\gamma_k+2it\lambda),~~{\rm for}~~t\approx t_k,\]
where $\alpha_k$ and $\gamma_k$ are real constants.  Thus, we have
\begin{align}\label{four-factors-3}
\xi(s)=t_k^{-4}[\lambda^2-\lambda_k^2-(t-t_k)^2+i2(t-t_k)\lambda]\cdot[\lambda^2-\lambda_k^2-(t+t_k)^2+i2(t+t_k)\lambda]
\cdot \alpha_k(\lambda^2-t^2+\gamma_k+2it\lambda).
\end{align}
The imaginary part of the above function is
\begin{align}
\alpha_k^{-1}t_k^4\cdot\Im\{\xi(s)\}&=  2(t-t_k)\lambda(\lambda^2-\lambda_k^2-(t+t_k)^2)(\lambda^2-t^2+\gamma_k)
             +2(t+t_k)\lambda(\lambda^2-\lambda_k^2-(t-t_k)^2)(\lambda^2-t^2+\gamma_k)\nonumber\\
&~~~~~ +2t\lambda\left[(\lambda^2-\lambda_k^2-(t-t_k)^2)
   (\lambda^2-\lambda_k^2-(t+t_k)^2)-4\lambda^2(t^2-t_k^2)\right]\label{Imxi-s}
\end{align}
The cross-section of $\Im\{\xi(s)\}$ on the line $t=t_k\gg 1$ is
\begin{align}\label{imag-xi-t_k}
\alpha_k^{-1}t_k^4\cdot\Im\{\xi(\tfrac{1}{2}+\lambda+it_k)\}
&=4t_k(\gamma_k-t_k^2+\lambda^2)\lambda(\lambda^2-\lambda_k^2) +4t_k (-4t_k^2+\lambda^2-\lambda_k^2)\lambda(\lambda^2-\lambda_k^2)\nonumber\\
 &\approx 4t_k(\gamma_k-5t_k^2)\lambda(\lambda^2-\lambda_k^2),
\end{align}
which is a cubic function, and crosses zero at $\lambda=-\lambda_k, 0$ and $\lambda_k$.
The critical points, where the slope $b(\lambda, t_k)=0$ (see (36) in \cite{report-no-5}), are at $\displaystyle{\lambda=\pm\frac{\lambda_k}{\sqrt{3}}}$.

By taking the partial derivative of the above equation w.r.t. $\lambda$ and setting $\lambda=0$, we find
\begin{align}\label{b-t-result}
b(t)=2\alpha_kt_k^{-4}t\left\{2(\lambda_k^2+t^2-t_k^2)(t^2-\gamma_k)
+[\lambda_k^2+(t-t_k)^2][\lambda_k^2+(t+t_k)^2]\right\}.
\end{align}
We should note that the above result can be alternatively derived from $\Xi(t)$, using the formulas  (\ref{def-b-t}).  The function $\Xi(t)$ can be simply obtained by setting $\lambda=0$ in (\ref{xi_H}).  The value of $b(t)$ at $t=t_k$ can be obtained as
\begin{align}\label{b(t_k)}
b(t_k)=2\alpha_k\lambda_k^2t_k^{-3}(6t_k^2-2\gamma_k+\lambda_k^2)=4\alpha_k\lambda_k^2 t_k^{-3}(3t_k^2-2\gamma_k)+O(t_k^{-2}),
\end{align}
which is clearly not zero, although it converges to zero as $t_k\to\infty$.

In order to find $t^*$ in the vicinity of $t_k$ such that $b(t^*)=-\Xi'(t^*)=0$, we set (\ref{b-t-result}) to zero, obtaining
\begin{align}
3t^4+2(2\lambda_k^2-2t_k^2-\gamma_k)t^2+2\gamma_k(t_k^2-\lambda_k^2)+(\lambda_k^2+t_k^2)^2=0,
\end{align}
from which we find the extremum point that we are after:
\begin{align}
{t^*}^2&=\tfrac{1}{3}\left(2t_k^2+\gamma_k-2\lambda_k^2+\sqrt{t_k^4-2(\gamma_k+7\lambda_k^2)t_k^2+(\lambda_k^2-\gamma_k)^2}\right)\nonumber\\
&=\tfrac{1}{3}\left(2t_k^2+\gamma_k-2\lambda_k^2+t_k^2-(\gamma_k+7\lambda_k^2)+O(t_k^{-2})\right)=t_k^2-3\lambda_k^2+O(t_k^{-2}).
\end{align}
Thus, the function $\Xi(t)$ takes a local extremum at $t=t^*$, where
\begin{align}\label{t-star}
t^*=t_k\sqrt{1-\frac{3\lambda_k^2}{t_k^2}+O(t_k^{-4})}=t_k-\delta_k,
\end{align}
where
\begin{align}\label{delta}
\delta_k=\frac{3\lambda_k^2}{2t_k}+O(t_k^{-2}).
\end{align}
The cross-section function $\Re\{\xi(\tfrac{1}{2}+\lambda+it^*)\}$ should be a quadratic function of $\lambda$ of the form $a(t^*)\lambda^2+\cdots$, where the function $a(t)$ can be readily found from $b(t)$:
\begin{align}
a(t)&=\tfrac{1}{2}b'(t)=\alpha_kt_k^{-4}\left\{2[\lambda_k^2t^2+(t^2-t_k^2)t^2-\gamma_k(t^2-t_k^2+\lambda_k^2)]
+[\lambda_k^2+(t-t_k)^2][\lambda_k^2+(t+t_k)^2]\right.\nonumber\\
&~~~~~~~~~~~~~~~~~+\left.4t^2(2\lambda_k^2+3t^2-2t_k^2-\gamma_k)\right\}\nonumber\\
&=\alpha_kt_k^{-4}\left[15t^4-12(t_k^2-\lambda_k^2)t^2+(\lambda_k^2+t_k^2)^2-2\gamma_k(3t^2-t_k^2+\lambda_k^2)\right]\nonumber\\
&=\alpha_kt_k^{-4}\left[15t^4-6(2t_k^2+\gamma_k-2\lambda_k^2)t^2+(t_k^2+\lambda_k^2)^2+2\gamma_k(t_k^2-\lambda_k^2)\right].
\end{align}

At $t=t_k$, its value is
\begin{align}
a(t_k)&=\alpha_kt_k^{-4}\left[4t_k^4-2(2\gamma_k-7\lambda_k^2)t_k^2-(2\gamma_k-\lambda_k^2)\lambda_k^2\right]\nonumber\\
&=4\alpha_k\left(1-\frac{2\gamma_k-7\lambda_k^2}{2t_k^2}+O(t_k^{-4})\right).
\end{align}

From Lemma \ref{lemma-saddle-point} we know that $s=\tfrac{1}{2}+it^*$ is a saddle point.  Thus, $\Xi(t)$ can be approximated by a quadratic function around $t=t^*$, i.e.,
\begin{align}\label{Xi-t-quadratic}
\Xi(t)= -a(t^*)(t-t^*)^2+\cdots,~~{\rm for}~~t\approx t^*.
\end{align}
Thus,
\begin{align}
\Xi(t_k)-\Xi(t^*)&=-a(t^*)\delta_k^2=-\frac{36\alpha_k\lambda_k^4}{4t_k^2}+O(t_k^{-4}),
\end{align}
where we used the estimate of $a(t^*)$ given by
\begin{align}
a(t^*)&=a(t_k)-a'(t_k)\delta_k+O(\delta_k^2)=4\alpha_k-\alpha_kt_k^{-4}60t_k^3\delta_k+O(\delta_k^2)\nonumber\\
&=4\alpha_k\left(1-\frac{45\lambda_k^2}{t_k^2}\right)+O(t_k^{-2}).
\end{align}
The slope of the tangent of $\Xi(t_k)$ is obtained from (\ref{Xi-t-quadratic}) as
\begin{align}
\frac{d\Xi(t_k)}{dt}=-2a(t^*)(t_k-t^*)=-2a(t^*)\delta_k=-12\alpha_k\lambda_k^2 t_k^{-1}.
\end{align}
From the Cauchy-Riemann equation,
\begin{align} \frac{d\Xi(t_k)}{dt}=-\left.\frac{\partial\Im\{\xi(\tfrac{1}{2}+\lambda+it_k)\}}{\partial\lambda}\right|_{\lambda=0}
=-b(t_k),
\end{align}
which confirms the result (\ref{b(t_k)}).

\end{document}